\newcommand{\bea}{\begin{eqnarray}}
\newcommand{\eea}{\end{eqnarray}}
\newcommand{\bna}{\begin{eqnarray*}}
\newcommand{\ena}{\end{eqnarray*}}
\numberwithin{equation}{section}
\theoremstyle{plain}
\newtheorem{lemma}{Lemma}[section]
\newtheorem{theorem}[lemma]{Theorem}
\newtheorem{proposition}[lemma]{Proposition}
\theoremstyle{definition}
\newtheorem{remark}{Remark}
\renewcommand{\Re}{\operatorname{Re}}
\renewcommand{\Im}{\operatorname{Im}}
\renewcommand{\bmod}[1]{\,(\mathrm{mod\,}{#1})}
\renewcommand{\i}{\mathrm{i}}
\title[A Selberg-type zero-density result for twisted $\rm GL_2$  $L$-functions
and its application]
{A Selberg-type zero-density result for  twisted $\rm GL_2$ $L$-functions and its application}
\author{Qingfeng Sun}
	\address{School of Mathematics and Statistics, Shandong University, Weihai\\Weihai, 264209, China}
	\address{State Key Laboratory of Cryptography and Digital Economy
Security \\ Shandong University, Weihai, 264209, China}
\email{qfsun@sdu.edu.cn}
\author{Hui Wang}
  \address{School of Mathematics, Shandong University, Jinan 250100, China}
    \email{wh0315@mail.sdu.edu.cn}
\author{Yanxue Yu}
  \address{School of Mathematics and Statistics, Shandong University, Weihai\\Weihai, 264209, China}
    \email{yanxueyu@mail.sdu.edu.cn}
\subjclass[2010]{11F12, 11F66.}
\keywords{Moments, primitive holomorphic cusp forms, zero-density estimates, Selberg's limit theorem.}
\thanks{Q. Sun was partially supported by the National Natural Science Foundation of China
(Grant Nos. 12471005 and 12031008) and
the Natural Science Foundation of Shandong Province (Grant No. ZR2023MA003).}
\date{}
\begin{document}
\begin{abstract}
Let $f$ be a fixed holomorphic primitive cusp form of even weight $k$,
level $r$ and trivial nebentypus $\chi_r$. Let $q$ be an odd prime with $(q,r)=1$
 and let $\chi$ be a primitive Dirichlet character modulus $q$ with $\chi\neq\chi_r$.
In this paper, we prove an
unconditional Selberg-type zero-density estimate for the family of twisted $L$-functions $L(s, f \otimes \chi)$ in the critical strip.
As an application, we establish an asymptotic formula for the even moments of
the argument function
$S(t, f \otimes \chi)=\pi^{-1}\arg L(1/2+\i t, f\otimes\chi)$ and
prove a central limit theorem for its distribution over $\chi$ of modulus $q$.

\end{abstract}
\maketitle

\section{Introduction}
\setcounter{equation}{0}

The zero-density estimates for automorphic $L$-functions play a pivotal role
in modern analytic number theory. These estimates provide a fundamental insight into the vertical distribution of
nontrivial zeros and are crucial in circumventing the Riemann Hypothesis
in numerous applications, such as bounding error terms in prime number theorems
or establishing equidistribution results.
To describe more precisely,
for $T>0$ and any fixed $\sigma\geq 0$, we define
\begin{equation*}
\begin{split}
N_{F}(\sigma; T)=\#\big\{\rho=1/2+\beta+\i\gamma: L(\rho,F)=0,\,
\sigma\leq\beta,\,|\gamma|\leq T\big\},
\end{split}
\end{equation*}
where $L(s,F)$ is an automorphic $L$-function associated with an automorphic form $F$,
which is absolutely convergent for $\Re(s)>1$,
admits an analytic continuation to the whole complex plane and satisfies a standard functional equation.

For the Riemann zeta function $\zeta(s)$, the classical zero-density estimate is due to Ingham \cite{Ingham}, who established the bound
\begin{equation*}\label{zero-density wrt zeta 1}
\begin{split}
N_{\zeta}(\sigma; T)\ll T^{3(1/2-\sigma)/(3/2-\sigma)}(\log T)^5,
\end{split}
\end{equation*}
uniformly for $0\leq \sigma \leq 1/2$.
This result was subsequently refined by several researchers, including
Montgomery, Huxley, Jutila, Heath-Brown, Ivi\'{c},
and more recently Guth and Maynard (see \cite{Montgomery, Huxley, Jutila, HB, Ivic, GM} and the references therein).

A major contribution to the theory was made by Selberg \cite[Theorem 1]{Selberg1},
who proved that for $0\leq \sigma \leq 1/2$,
\begin{equation*}\label{zero-density wrt zeta 2}
N_{\zeta}(\sigma; T)\ll T^{1-\sigma/4}\log T.
\end{equation*}
This estimate remains effective even when $\sigma$ is as small as $1/\log T$,
and it plays a crucial role in Selberg's unconditional proof that both the real
and imaginary parts of $\log \zeta(1/2 + \i t)$ are normally distributed over large intervals $t \in [T, 2T]$.
For Dirichlet $L$-functions $L(s,\chi)$,
Selberg \cite[Theorem 4]{Selberg2}
established an analogous zero-density estimate for
Dirichlet $L$-functions $L(s,\chi)$ over
the primitive character $\chi$ modulo a large integer $q$.
Using this estimate, he showed that for fixed $t$ the argument of
$L(1/2+\i t,\chi)$ becomes normally distributed as $\chi$ varies
modulo $q$, as $q\to\infty$.

While the zero-density theory has been extensively developed for
classical $L$-functions, such as
the Riemann zeta function and Dirichlet $L$-functions, and has achieved significant
progress for certain families of automorphic $L$-functions in higher-rank settings
(e.g., \cite{Luo1, KM, Hough, LS24, SW24}),
the zero-density behavior of twisted $L$-functions
remains poorly understood. In particular,
for the family of $L$-functions $L(s, f \otimes \chi)$,
where $f$ is a fixed modular form and $\chi$ varies over primitive
Dirichlet characters modulo $q$, there seems to be few results so far.

In this paper, we establish a Selberg-type zero-density estimate
for the twisted $L$-functions $L(s, f \otimes \chi)$ and apply it to study
the distribution of their argument functions.
Let $f$ be a fixed primitive holomorphic cusp form of even weight $k$, level $r$
and trivial nebentypus $\chi_r$
for the congruence subgroup $\Gamma_0(r)$,
where the Fourier expansion of $f$ at $\infty$ is given by
\bna
f(z)=\sum_{n=1}^\infty \lambda_f(n)n^{\frac{k-1}{2}}e(nz),
\ena
normalizd such that $\lambda_f(1)=1$.
The Ramanujan--Petersson conjecture,
which was proved by Deligne \cite{De}, asserts that
\bea\label{eq:KS}
|\lambda_{f}(n)|\leq\tau(n), \quad\text{for all}\,\, n \geq 1,
\eea
with $\tau(n)$ being the divisor function.
For $\Re(s)>1$, the Hecke $L$-function associated to $f$ is defined by
\begin{equation*}
\begin{split}
L(s,f)=\sum_{n=1}^\infty\frac{\lambda_f(n)}{n^s}
&=\prod_p\left(1-\frac{\lambda_f(p)}{p^{s}}
+\frac{\chi_r(p)}{p^{2s}}\right)^{-1}\\
&=\prod_p\left(1-\frac{\alpha_f(p)}{p^{s}}\right)^{-1}
\left(1-\frac{\beta_f(p)}{p^{s}}\right)^{-1},
\end{split}
\end{equation*}
where $\alpha_f(p)$ and $\beta_f(p)$ are local roots of $L(s,f)$ at $p$, satisfying
\bea\label{Deligne bound}
\alpha_f(p)+\beta_f(p)=\lambda_f(p),\quad
\alpha_f(p)\beta_f(p)=\chi_r(p)
\quad\text{and}\quad|\alpha_f(p)|,\,|\beta_f(p)|\leq1.
\eea

Let $q$ be an odd prime, $(q,r)=1$, $\varphi^*(q)=q-2$ denote the number of primitive characters modulo $q$.
Let $\chi$ be a primitive Dirichlet character modulus $q$ with $\chi\neq\chi_r$.
Note that $f\otimes\chi$ is a primitive cusp form for the group $\Gamma_0(q^2r)$ with
central character $\chi^2\chi_r$.
The twisted $L$-function associated with it is defined by
\begin{equation*}
\begin{split}
L(s, f\otimes\chi)
=\sum_{n=1}^{\infty}\frac{\lambda_f(n)\chi(n)}{n^s}
&=\prod_p\left(1-\frac{\lambda_f(p)\chi(p)}{p^{s}}
+\frac{\chi_r(p)\chi^2(p)}{p^{2s}}\right)^{-1}\\
&=\prod_p\left(1-\frac{\alpha_f(p)\chi(p)}{p^{s}}\right)^{-1}
\left(1-\frac{\beta_f(p)\chi(p)}{p^{s}}\right)^{-1},
\quad\Re(s)>1.
\end{split}
\end{equation*}
It is known that $L(s,f\otimes\chi)$ admits an analytic continuation to
the whole complex plane $\mathbb{C}$ as an entire function
and satisfies the functional equation (see Iwaniec--Kowalski \cite[Proposition 14.20]{IK})
\bna
\Lambda(s,f\otimes\chi)
=\varepsilon(f\otimes\chi)
\Lambda(1-s,f\otimes\overline{\chi}),
\ena
with
\bna
\Lambda(s,f\otimes\chi)
=\left(\frac{q\sqrt{r}}{2\pi}\right)^s
\Gamma\left(s+\frac{k-1}{2}\right)
L(s,f\otimes\chi)
\ena
and
\bna
\varepsilon(f\otimes\chi)
=\varepsilon(f)\chi(r)\varepsilon_\chi^2,
\ena
where $\varepsilon(f)$ is the root number of
$L(s,f)$ satisfying $|\varepsilon(f)|=1$ and
$\varepsilon_\chi$ is the normalized Gauss sum.

Our first main result is the following zero-density estimate for twisted $L$-functions $L(s, f\otimes \chi)$.
\begin{theorem}\label{zero-density}
Let $q$ be a prime number. For $\sigma>1/2$,
we let
\bna
N(f\otimes\chi;\sigma,t_1,t_2)
:=\#\{\rho=\beta+\mathrm{i}\gamma \,|\, L(\rho,f\otimes\chi)=0,
\, \beta\geq\sigma,\; t_1\leq\gamma\leq t_2\}
\ena
and
\bea\label{N(sigma,H)}
N(f;\sigma, t_1,t_2):=\frac{1}{\varphi^*(q)}\;\;\sideset{}{^*}\sum_{\chi\bmod q}N(f\otimes\chi;\sigma,t_1,t_2).
\eea
There exists an absolute constant $A>0$ such that
for any real numbers $t_1$ and $t_2$ with
\begin{equation*}
\begin{split}
t_1<t_2\quad\text{and}\quad t_2-t_1\geq1/\log q,
\end{split}\end{equation*}
for any $1/2+1/\log q\leq\sigma\leq 1$ and for any $c$ with $0<c<1/360$, one has
\bna
N(f;\sigma, t_1,t_2)
\ll(1+|t_1|+|t_2|)^A(t_2-t_1)
q^{-\eta(\sigma-1/2)}\log q,
\ena
where $\eta=\eta(c)>0$ is a constant and the implied constant depends only on $c$.
\end{theorem}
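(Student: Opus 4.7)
The plan is to adapt Selberg's classical mollification method to the family $\{L(s,f\otimes\chi)\}_{\chi\bmod q}$, performing the averaging in the $\chi$-aspect rather than vertically as for the Riemann zeta function. The argument has three ingredients: construction of a short mollifier, the use of Littlewood's lemma to convert the zero count into a boundary integral, and a large-sieve-type mean square estimate that exploits the orthogonality of characters modulo $q$.

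\emph{Step 1 (the mollifier).} With $X = q^c$ for the $c$ appearing in the theorem, and $V$ a fixed smooth cutoff supported in $[0,1]$ with $V(x) = 1$ on $[0,1/2]$, set
\[
M(s,\chi) = \sum_{n \leq X} \frac{\mu_f(n)\chi(n)}{n^s}\, V\!\left(\frac{n}{X}\right),
\]
where $\mu_f(n)$ are the Dirichlet coefficients of $L(s,f)^{-1}$, satisfying $|\mu_f(n)| \leq \tau(n)$ by \eqref{eq:KS}. Let $F(s,\chi) = L(s,f\otimes\chi)M(s,\chi)$. By the mollifier property, writing $F(s,\chi) = \sum_k c_k\chi(k)k^{-s}$ (formally, then continued analytically), one has $c_1 = 1$, $c_k = 0$ for $2 \leq k \leq X/2$, and $|c_k| \ll \tau_3(k)$ for $k > X/2$.

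\emph{Step 2 (from zeros to mean squares).} For a fixed target $\sigma \in [1/2+1/\log q,\,1]$, set $\sigma_1 = \sigma - 1/(2\log q) > 1/2$. Applying Littlewood's lemma to $F(\cdot,\chi)$ on the rectangle $[\sigma_1,2] \times [t_1-1,t_2+1]$, with a small endpoint adjustment of size $O(1/\log q)$ to avoid zeros on the boundary, the right-edge contribution is negligible since $F(2+it,\chi) = 1 + O(X^{-1})$, and the $\arg F$ boundary terms are controlled by $O(\log(q(2+|t|)))$ via the standard Backlund--Titchmarsh argument for $L(\cdot,f\otimes\chi)$ combined with a zero-counting bound for the Dirichlet polynomial $M$. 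This yields
\[
N(f\otimes\chi;\sigma,t_1,t_2)\ \ll\ (\log q)\int_{t_1-1}^{t_2+1}\log^+\!|F(\sigma_1+it,\chi)|\, dt + (\log q)(2+|t_1|+|t_2|)^{A_0}.
\]
Using $\log^+ y \leq y^2/2$ and averaging over primitive $\chi\bmod q$, the proof reduces to bounding
\[
\mathcal{M}(\sigma) := \frac{1}{\varphi^*(q)}\sideset{}{^*}\sum_{\chi\bmod q}\int_{t_1-1}^{t_2+1}|F(\sigma_1+it,\chi)|^2\, dt.
\]

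\emph{Step 3 (mean square by the large sieve).} Using an approximate functional equation for $L(s,f\otimes\chi)$ with a smooth rapidly decaying cutoff at the threshold $Y \asymp q\sqrt{r}\,(1+|t|)^{1/2}$, the function $F(\sigma_1+it,\chi)$ is expressed as a sum of two Dirichlet polynomials in $\chi$ of total length $\ll XY$, with coefficients bounded by $\tau_3$. The Montgomery--Vaughan large sieve $\sideset{}{^*}\sum_\chi\int_T |P(it)|^2\, dt \ll (qT+N)\sum|a_n|^2 n^{-2\sigma_1}$ applied to each polynomial, combined with the crucial vanishing $c_k = 0$ for $k \leq X/2$, produces
\[
\mathcal{M}(\sigma)\ \ll\ (t_2-t_1)(2+|t_1|+|t_2|)^{A_0}\,X^{1-2\sigma_1}(\log q)^{A_1}.
\]
Since $\sigma_1 = \sigma - 1/(2\log q)$, one has $X^{1-2\sigma_1} \asymp q^{-2c(\sigma-1/2)}$, and assembling the pieces gives the theorem with $\eta = 2c$ (up to a harmless logarithmic adjustment).

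\emph{Main obstacle.} The critical point is Step 3: one must arrange that the diagonal saving $X^{1-2\sigma_1}$ is not overwhelmed by the off-diagonal term $N \asymp XY$ in the large sieve, which is comparable to or larger than $q(t_2-t_1)$. The constraint $c < 1/360$ reflects precisely this balance between the mollifier length $X = q^c$, the effective length $Y$ of the dual sum from the AFE, and the divisor-function losses in the coefficients of $F$; enlarging the admissible range of $c$ would require sharper subconvexity or shifted-convolution information for $L(s,f\otimes\chi)$. Verifying the coefficient bound $|c_k| \ll \tau_3(k)$ uniformly in $k$, and controlling the $\arg F$ boundary integrals in Littlewood's lemma so that the final estimate retains only a polynomial factor $(1+|t_1|+|t_2|)^A$, requires care but is standard in view of \eqref{Deligne bound}.
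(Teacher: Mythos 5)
Your proposal follows the right broad strategy (mollification plus conversion of the zero count to a boundary mean-square plus averaging over $\chi$), but it diverges from the paper at two points where the divergence matters, and at least one of them creates a genuine gap.

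First, the localization tool. You apply the classical Littlewood lemma to $F=LM$ on a rectangle of height $t_2-t_1+2\geq 2$, so both the left-edge integral and the horizontal $\arg F$ terms live on a window of unit length regardless of $t_2-t_1$. The theorem demands a factor $(t_2-t_1)$ in the final bound, and $t_2-t_1$ can be as small as $1/\log q$; your second boundary term $(\log q)(2+|t_1|+|t_2|)^{A_0}$ carries no such factor and already swamps the target bound. This is exactly the difficulty that Selberg's weighted argument principle (the paper's Lemma~\ref{The argument principle}, cited from Selberg's Dirichlet paper) is built to handle: the $\sin$ and $\sinh$ weights decay away from the short window $[t_1',t_2']$, so both boundary contributions are automatically proportional to $t_2'-t_1'\asymp 1/\log q$, and there are no unweighted horizontal $\arg F$ terms to control. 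Without that localization your inequality in Step~2 does not reduce to the required form.

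Second, the mean-square input. You assert that the Montgomery--Vaughan large sieve applied after an approximate functional equation gives $\mathcal M(\sigma)\ll (t_2-t_1)(1+|t_1|+|t_2|)^{A_0}X^{1-2\sigma_1}(\log q)^{A_1}$. Two problems: (i) with $F\approx 1$ near the critical line, $\mathcal M(\sigma)=\frac{1}{\varphi^*(q)}\sum_\chi\int|F|^2$ is $\asymp$ (length of the interval) and cannot decay with $\sigma$; you should be bounding $\frac{1}{\varphi^*(q)}\sum_\chi\int|F-1|^2$ and using $\log^+|F|\leq|F-1|$ rather than $\log^+y\leq y^2/2$ (the latter is useless when $y\approx 1$). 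More seriously, (ii) after the AFE the Dirichlet polynomial has effective length $N\asymp XY\asymp q^{1+c}(1+|t|)^{1/2}$, so the off-diagonal term $N$ in the large sieve dominates the $q(t_2-t_1)$ term, and dividing by $\varphi^*(q)\asymp q$ leaves a residual $q^{c}$ that the saving $X^{1-2\sigma_1}$ does not kill near $\sigma=1/2$. The genuine saving comes from the mollification cancellations, whose extraction requires nontrivial input (shifted-convolution/off-diagonal analysis). The paper does not re-derive this; it invokes the mollified second-moment theorem of Blomer--Fouvry--Kowalski--Michel--Mili\'cevi\'c--Sawin (Lemma~\ref{second moment}), and the constraint $c<1/360$ is inherited from there. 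You correctly identify this as "the main obstacle," but it is not a standard large-sieve step: in the form you describe, Step~3 would not close.

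In short: switch to Selberg's weighted argument principle applied to $\omega=1-(LM-1)^2$ (as in the paper), bound $\log|\omega|$ by $|LM-1|^2$, and quote the BFKMMS second-moment theorem for the resulting averages. That replaces the two steps above that do not currently work.
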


The zero-density estimates serve not only as a tool for bounding the number of zeros off the critical line but also play a crucial role in understanding the statistical behavior of the argument functions of $L$-functions. One of the most prominent examples is the study of the function
\bna
S(t)=\frac{1}{\pi}\arg \zeta(1/2+\i t),
\ena
which is intimately connected to the distribution of the nontrivial zeros of the Riemann zeta function $\zeta(s)$ up to height $t$.
Defined via continuous variation along the horizontal line
from $\infty + \i t$ to $1/2 + \i t$, starting at $\infty + \i t$ with initial value 0,
the behavior of $S(t)$ has been studied extensively
by numerous number theorists (see \cite{Backlund, Cramer, Littlewood, Titchmarsh1}).

Selberg \cite{Selberg, Selberg1} initiated the study of the distribution of $S(t)$ and proved that for any integer $n\geq1$,
\bna
\frac{1}{T}\int_{T}^{2T}S(t)^{2n}\mathrm{d}t=\frac{(2n)!}{n!(2\pi)^{2n}}(\log\log T)^n+O\big((\log\log T)^{n-1/2}\big),
\ena
which indicates that $|S(t)|$ typically has size $\sqrt{\log\log T}$ in an average sense.
Subsequently, Selberg \cite{Selberg2} generalized this investigation to the family of Dirichlet $L$-functions. For a fixed $t>0$, he showed that
\bna
\frac{1}{q}\,\,\sideset{}{^\star}\sum_{\chi \bmod q} S(t, \chi)^{2n}=\frac{(2n)!}{n!(2\pi)^{2n}}(\log\log q)^n+O\big((\log\log q)^{n-1/2}\big),
\ena
where $S(t, \chi) = \frac{1}{\pi} \arg L(1/2 + \i t, \chi)$ and the summation is over primitive characters modulo $q$.
Analogous results for odd moments can also be derived with slight modifications of Selberg's method.

Parallel results in the $\rm GL_2$ and $\rm GL_3$ settings have been established in various contexts.
For $\rm GL_2$ holomorphic cusp forms, unconditional results were obtained by Liu--Shim~\cite{LS}
in the weight aspect and by the first two authors~\cite{SW} in the level aspect, respectively.
The case of $\rm GL_2$ Hecke--Maass forms, analogous results were initially proved
under the assumption
of the Generalized Riemann Hypothesis (GRH) by Hejhal--Luo~\cite{HL}, and later rendered
unconditionally by the first two authors~\cite{SW2} through the application of weighted zero-density estimates.
More recently, the first two authors~\cite{SW25} established
an unconditional asymptotic formula for the moments of
$
S_F(t) = \pi^{-1}\arg L(1/2 + \i t, F),
$
where $F$ is a Hecke--Maass cusp form for $\mathrm{GL}_3$ in the generic position.
This work removes the reliance on GRH required in the earlier work of Liu--Liu~\cite{LL}.

Inspired by these developments, we aim to investigate the behavior of the argument function
\[
S(t, f\otimes\chi) := \frac{1}{\pi}\arg L(1/2 + \i t, f\otimes \chi),
\]
averaged over primitive characters modulo $q$.
Using Theorem~\ref{zero-density} as a key input,
we prove an unconditional asymptotic formula for the even moments of $S(t, f \otimes \chi)$,
and derive a quantitative central limit theorem.

\begin{theorem}\label{main-theorem}
Let $t>0$ and $n\in \mathbb{N}$ be given.
For sufficiently large prime number $q$, we have
\bea\label{main result}
\frac{1}{\varphi^*(q)}\;\;\sideset{}{^*}\sum_{\chi\bmod q}S(t,f\otimes\chi)^{2n}
=\frac{(2n)!}{n!(2\pi)^{2n}}
\left(\log\log q\right)^{n}
+O\left(\left(\log\log q\right)^{n-\frac{1}{2}}\right),
\eea
where $*$ denotes the summation goes through the primitive characters $\chi \bmod q$.

Moreover, we define the following probability measure
$\mu_{q}$ on $\mathbb{R}$ by
\bna
\mu_q(E):=\frac{1}{\varphi^*(q)}\;\;\sideset{}{^*}\sum_{\chi\bmod q}\textbf{1}_E\left(\frac{S(t,f\otimes\chi)}{\sqrt{\log\log q}}\right),
\ena
where $\mathbf{1}_E$ is the characteristic function of a Borel measurable set $E$ in $\mathbb{R}$.
Then for every $a<b$, we have
\bna
\lim_{q\rightarrow \infty} \mu_q([a,b])=\sqrt{\pi}\int_a^b \exp(-\pi^2\xi^2)\mathrm{d}\xi,
\ena
uniformly with respect to $q$.
\end{theorem}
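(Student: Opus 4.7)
The plan is to apply Selberg's method of moments, adapted to the family of twisted $L$-functions $L(s, f\otimes\chi)$. I will first establish the even moment asymptotic \eqref{main result}; together with an $o((\log\log q)^n)$ bound for the corresponding odd moments (obtainable by a parallel argument using the same prime-polynomial approximation), the central limit theorem then follows from the Fr\'echet--Shohat method of moments, since the Gaussian distribution is uniquely determined by its moments. Indeed, $(2n)!/(n!(2\pi)^{2n})$ is precisely the $2n$-th moment of the centered Gaussian with variance $1/(2\pi^2)$, whose density is the claimed $\sqrt{\pi}\exp(-\pi^2\xi^2)$.

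The first technical step is to approximate $S(t, f\otimes\chi)$ by a short Dirichlet polynomial over primes. Fixing an abscissa $\sigma_0 = 1/2 + c_1/\log q$ for a small constant $c_1 > 0$ and applying Littlewood's lemma to $\log L(s, f\otimes\chi)$ on a suitable rectangle, together with the Dirichlet series expansion of $\log L$ in the region of absolute convergence, I would derive an identity of the form
\bna
S(t, f\otimes\chi) = \frac{1}{\pi}\,\Im \sum_{p \leq q^A} \frac{\lambda_f(p)\chi(p)}{p^{1/2+\i t}} + E(t, f\otimes\chi),
\ena
for an appropriately chosen $A > 0$ (depending on $n$). Here $E(t, f\otimes\chi)$ absorbs contributions from prime powers $p^m$ with $m \geq 2$ (bounded via \eqref{Deligne bound}), from the Dirichlet tail beyond $q^A$, and most importantly from zeros of $L(s, f\otimes\chi)$ in the strip $\sigma_0 \leq \Re(s) \leq 1$.

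The main obstacle, and precisely where Theorem \ref{zero-density} enters, is to control the zero contribution in $E$ on average over $\chi$; specifically, to establish
\bna
\frac{1}{\varphi^*(q)}\sideset{}{^*}\sum_{\chi \bmod q} |E(t, f\otimes\chi)|^{2n} \ll (\log\log q)^{n-1/2}.
\ena
Theorem \ref{zero-density} asserts that the $\chi$-averaged number of zeros $\rho = \beta + \i\gamma$ with $\beta \geq \sigma$ and $\gamma$ in a short window around $t$ is bounded by $q^{-\eta(\sigma - 1/2)}\log q$. Integrating this bound against $d\sigma$ from $\sigma_0$ up to $1$ and applying H\"older's inequality should yield the required moment control, but tracking the $n$-dependence and verifying that the exponential saving $q^{-\eta(\sigma - 1/2)}$ dominates the relevant combinatorial growth is the most delicate part of the argument.

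For the main term, expanding the $2n$-th power of the prime polynomial and invoking orthogonality of primitive characters modulo $q$ reduces the computation to tuples of primes $(p_1, \ldots, p_{2n})$ with $\prod_j p_j^{\epsilon_j} \equiv 1 \pmod q$ for signs $\epsilon_j \in \{\pm 1\}$ arising from the expansion $\Im X = (X - \bar X)/(2\i)$. Choosing $A$ small enough that this congruence forces exact equality $\prod_j p_j^{\epsilon_j} = 1$, only perfect pairings of indices contribute, producing a combinatorial factor $(2n)!/(n!\, 4^n)$ times the $n$-th power of the Rankin--Selberg-type prime sum
\bna
\sum_{p \leq q^A}\frac{|\lambda_f(p)|^2}{p} = \log\log q + O(1)
\ena
(whose asymptotic follows from the holomorphy and nonvanishing of $L(s, \sym^2 f)$ at $s=1$). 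Multiplying by $1/\pi^{2n}$ then gives exactly $(2n)!/(n!(2\pi)^{2n})(\log\log q)^n$, matching \eqref{main result}.
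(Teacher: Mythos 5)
Your overall strategy matches the paper's: write $S(t,f\otimes\chi) = M(t,f\otimes\chi) + R(t,f\otimes\chi)$ with $M$ a short Dirichlet polynomial over primes, compute even moments of $M$ via orthogonality and a diagonal/pairing count, control the remainder $R$ on average using the zero-density estimate, and conclude the CLT by the method of moments. Your main-term combinatorics ($(2n)!/(n!\,4^n)\cdot \pi^{-2n} = (2n)!/(n!(2\pi)^{2n})$, with only exact equalities $\prod p_j^{\epsilon_j}=1$ surviving when the prime cutoff is below $q^{1/n}$) are correct, and your observation that odd moments must also be tracked before invoking the moment method is more careful than the paper's own exposition, which verifies only even moments before citing Billingsley.

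However, there is a genuine quantitative gap in your treatment of the remainder. You aim to show $\frac{1}{\varphi^*(q)}\sum^*_\chi |E(t,f\otimes\chi)|^{2n} \ll (\log\log q)^{n-1/2}$. This is too weak. When you expand $S^{2n} = (M+E)^{2n}$ and apply H\"older to the cross term $\ell=1$, you obtain
\begin{equation*}
\frac{1}{\varphi^*(q)}\sideset{}{^*}\sum_{\chi\bmod q}|M|^{2n-1}|E|
\ll \bigg(\frac{1}{\varphi^*(q)}\sideset{}{^*}\sum_{\chi}|M|^{2n}\bigg)^{\frac{2n-1}{2n}}
\bigg(\frac{1}{\varphi^*(q)}\sideset{}{^*}\sum_{\chi}|E|^{2n}\bigg)^{\frac{1}{2n}}
\ll (\log\log q)^{n-\frac{1}{2}+\frac{\alpha}{2n}},
\end{equation*}
where $\alpha$ is the exponent of $\log\log q$ in your bound for the $2n$-th remainder moment. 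To land in the error term $O((\log\log q)^{n-1/2})$ you must have $\alpha\le 0$, i.e.\ the paper's Proposition~\ref{main propsition}\eqref{R(t,f) moment} bound $\frac{1}{\varphi^*(q)}\sum^*_\chi|R|^{2n} = O_{t,n}(1)$, not $(\log\log q)^{n-1/2}$. Reaching that $O(1)$ bound is exactly why a fixed abscissa $\sigma_0 = 1/2 + c_1/\log q$ with Littlewood's lemma is not enough: the paper instead uses Selberg's mollified explicit formula (Lemma~\ref{lemma 1} with the kernel $x^u(1-x^u)^2/u^3$) and a character-dependent abscissa $\sigma_x = \sigma_{x,f\otimes\chi,t}$ that recedes from $1/2$ only when a zero $\rho=\beta+\i\gamma$ with $\beta$ away from $1/2$ lies in the window $|\gamma-t|\le x^{3|\beta-1/2|}/\log x$ (see \eqref{sigma_x}--\eqref{condition 1}). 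The quantity $(\sigma_x-1/2)^{4n}x^{4n(\sigma_x-1/2)}$ is then averaged over $\chi$ in Lemma~\ref{lemma 6} using Theorem~\ref{zero-density}, exploiting the exponential saving $q^{-\eta(\sigma-1/2)}$ against the exponential weight $x^{8n(\beta-1/2)}$; that is the piece that ultimately produces $O(1)$ for the $R$-moments and where the constraint $\delta < 3\eta/(8n+3A+3)$ on the prime cutoff enters. Without this adaptive abscissa, the contribution of zeros close to but slightly off the critical line near height $t$ is not controlled at the required strength.
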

\begin{remark}
For a holomorphic primitive cusp form  $f$
with even weight $k$, level $r$ and trivial nebentypus $\chi_r$,
a primitive Dirichlet character $\chi$ modulus prime $q$ with $(q,r)=1$ and $\chi\neq\chi_r$,
our result indicates that $|S(t,f\otimes\chi)|$ on average has order of magnitude $\sqrt{\log\log q}$. In addition, as $q\rightarrow \infty$, the ratios $S(t,f\otimes\chi)/\sqrt{\log\log q}$ converges in distribution
to a normal distribution of mean $0$
and variance $(2\pi^2)^{-1}$.
\end{remark}

\medskip

\noindent\textbf{Notation}
Throughout the paper,
$\varepsilon$ is an arbitrarily small positive
real number and $A$ is a sufficiently large positive number which may not be the same at each occurrence.
As usual, $e(z) = \exp (2 \pi \i z) = e^{2 \pi \i z}$.
We write $f = O(g)$ or $f \ll g$ to mean $|f| \leq Cg$ for some unspecified positive constant $C$.

\medskip

\noindent\textbf{Structure of this paper.} This paper is organized as follows.
In Section~\ref{prelim}, we provide essential preliminary results needed for the proof.
In Section~\ref{Prove Theorem 1.1}, we present the detailed proofs of Theorems \ref{zero-density} and \ref{main-theorem}.
In Section~\ref{approximation of S(t)}, we establish some lemmas and provide a crucial approximation of $ S(t,f\otimes\chi)$.
Finally, in Section~\ref{Proof of the main theorem},
we give the detailed proof of Proposition~\ref{main propsition}.

\section{Preliminaries}\label{prelim}
Firstly, we collect some classical results.
\begin{lemma}\label{bound} Let $x\geq 2$. The following statements hold.

(1) There exist constants $c_1$ and $c_2$, such that
\bna\label{classical bound 1}
\sum_{p\leq x}\frac{1}{p}=\log\log x+c_1+O(e^{c_2\sqrt{\log x}}).
\ena

(2) We have
\bna\label{classical bound 3}
\sum_{p\leq x}\frac{\log p}{p}=\log x+O(1),
\ena
and
\bna\label{classical bound 2}
\sum_{p\leq x}\frac{(\log p)^2}{p}\ll (\log x)^2.
\ena

(3) Let $f$ be a cuspidal holomorphic Hecke eigenforms of even weight $k$ for the modular group $\rm SL_2(\mathbb{Z})$ with normalized Fourier coefficients $\lambda_f(n)$, then we have
\bna\label{selberg orthogonal}
\sum_{p\leq x}\frac{|\lambda_f(p)|^2}{p}=\log\log x+O(1).
\ena
\end{lemma}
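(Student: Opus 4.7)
The plan is to derive each of the three estimates from classical analytic inputs: the Prime Number Theorem with its standard zero-free-region error for parts (1) and (2), and the Rankin-Selberg convolution $L(s,f\otimes f)$ for part (3), with Abel summation as the connecting tool throughout.

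For parts (1) and (2), I would start with Mertens' second theorem $\sum_{p\le x}(\log p)/p = \log x + O(1)$, which is proved directly from the identity $\log(\lfloor x\rfloor!) = \sum_{p^k\le x}\lfloor x/p^k\rfloor \log p$ combined with Stirling's formula, requiring no Prime Number Theorem input. The bound $\sum_{p\le x}(\log p)^2/p \ll (\log x)^2$ then follows by Abel summation from Chebyshev's estimate $\theta(x) \ll x$. For (1), I would invoke the stronger form $\theta(x) = x + O\bigl(x\exp(-c\sqrt{\log x})\bigr)$ coming from the classical de la Vall\'ee Poussin zero-free region, and apply Abel summation to $\sum_{p\le x} 1/p = \int_{2^-}^{x} (t\log t)^{-1}\,\dd\theta(t)$; the main term is $\int_2^x (t\log t)^{-1}\,\dd t = \log\log x + c_1 + o(1)$, and the contribution of the fluctuation $\theta(t)-t$ is bounded by $O\bigl(\exp(-c_2\sqrt{\log x})\bigr)$ as claimed (the sign of the exponent as written in the statement appears to be a typographical slip).

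For part (3), the key input is the Rankin-Selberg $L$-function $L(s,f\otimes f)$, which (since $f$ is a form for $\SL_2(\mathbb{Z})$ in this part) factors as $\zeta(s)L(s,\sym^2 f)$. By Shimura's work, $L(s,\sym^2 f)$ continues to an entire function and is non-vanishing at $s=1$, so $L(s,f\otimes f)$ has a simple pole at $s=1$. Taking logarithms of the Euler product gives
\[
\log L(s,f\otimes f) = \sum_{p}\frac{|\lambda_f(p)|^2}{p^s} + H(s),
\]
where $H(s)$, collecting the contributions of higher prime powers, is holomorphic and bounded on a half-plane $\Re(s)\ge 1-\delta$ by virtue of the Deligne bound \eqref{Deligne bound}. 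Comparison with $\log L(s,f\otimes f) = -\log(s-1) + O(1)$ near $s=1$, followed by a Mertens-style partial-summation argument (Landau's theorem applies cleanly since the coefficients are non-negative), yields the stated estimate. The only genuinely non-elementary input is the analytic continuation and non-vanishing at $s=1$ of $L(s,\sym^2 f)$, which I would cite as a black box; all remaining steps are routine, and I do not expect a serious obstacle.
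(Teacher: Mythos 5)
The paper itself gives no proof: it simply cites Davenport \cite[pp.~56--57]{Dav} for parts (1) and (2) and Liu--Wang--Ye \cite[Corollary~1.5]{LWY} for part (3). Your sketch is correct and essentially reconstructs the content of those references. Parts (1) and (2) indeed follow from the elementary Mertens estimates together with Abel summation; for the sharper error term in (1) the de la Vall\'ee Poussin zero-free region / PNT error is exactly the right input, and you are right that the exponent in the statement should read $e^{-c_2\sqrt{\log x}}$ (the missing minus sign is a typographical slip in the paper, and in fact the stated form with $+c_2$ is a weaker, trivially true, statement). For part (3) your route via $L(s,f\otimes f)=\zeta(s)L(s,\sym^2 f)$, the non-vanishing of $L(s,\sym^2 f)$ at $s=1$ (Shimura), and a Mertens/Tauberian comparison is precisely the mechanism behind Selberg's orthogonality in the $\GL_2$ case and matches what Liu--Wang--Ye prove in greater generality. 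One small point worth making explicit if you were to write this out fully: after isolating $\sum_p |\lambda_f(p)|^2 p^{-s}$ and comparing with $-\log(s-1)$, you still need a genuine partial-summation/Tauberian step (e.g.\ Ikehara or, more elementarily, the Hecke relation $\lambda_f(p)^2=\lambda_f(p^2)+1$ together with a PNT-type estimate for $\sym^2 f$) to pass from behavior near $s=1$ to the truncated sum over $p\le x$; merely invoking Landau's non-negativity theorem is not by itself enough to produce the $\log\log x$ asymptotic with a bounded error. That said, this is a well-trodden step and your overall plan is sound.
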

\begin{proof}
The first and second assertions come from Davenport \cite[pp.\,56--57]{Dav},
and the third assertion is from Liu--Wang--Ye \cite[Corollary 1.5]{LWY}.
\end{proof}
\begin{lemma}\label{lem18}
Let $1<y\leq q^{1/n}$. If $|a_{p}|<A\frac{\log p}{\log y}$ for all $p\leq y$, we have
\bna
\sideset{}{^*}\sum_{\chi\bmod q}\left| \sum_{p < y} \frac{a_p}{\sqrt{p}} \chi(p) \right|^{2n}=O(q),
\ena
and if $|a_p'|<A$ for $p\leq y$,
\bna
\sideset{}{^*}\sum_{\chi\bmod q}\left| \sum_{p < y} \frac{a_p'}{p} \chi(p^2) \right|^{2n}=O(q).
\ena
\end{lemma}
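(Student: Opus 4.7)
The plan is to attack both bounds by the standard large-sieve/orthogonality argument: raise the sum inside the absolute value to the $n$-th power so that it becomes a linear combination $\sum_m b(m)\chi(m)$ over integers $m<q$, apply character orthogonality over primitive $\chi\bmod q$, and then use the smoothness constraint $y\le q^{1/n}$ to force the diagonal $m_1=m_2$.

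For the first assertion, I set $S(\chi)=\sum_{p<y}\frac{a_p}{\sqrt p}\chi(p)$ and write
\[
S(\chi)^n=\sum_{m}b(m)\chi(m),\qquad b(m)=\sum_{p_1\cdots p_n=m}\frac{a_{p_1}\cdots a_{p_n}}{\sqrt{p_1\cdots p_n}}.
\]
Since every contributing $m$ lies in $(0,y^n]\subseteq(0,q]$ and $q$ is prime, for a nontrivial product we have $m<q$. Using the prime-$q$ identity
\[
\sideset{}{^*}\sum_{\chi\bmod q}\chi(m_1)\overline{\chi(m_2)}=\varphi(q)\mathbf 1_{m_1=m_2}-1,
\]
I obtain $\sideset{}{^*}\sum_\chi|S(\chi)|^{2n}\le\varphi(q)\sum_m|b(m)|^2$. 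A standard multinomial rearrangement—writing the multiset $\{p_1,\dots,p_n\}$ as $q_1^{k_1}\cdots q_s^{k_s}$ and comparing the coefficient $n!/(k_1!\cdots k_s!)$ of $b(m)$ with the same weight appearing in $\sum_{(p_1,\dots,p_n)}|a_{p_1}|^2\cdots|a_{p_n}|^2/(p_1\cdots p_n)$—yields
\[
\sum_m|b(m)|^2\le n!\,\Bigl(\sum_{p<y}\frac{|a_p|^2}{p}\Bigr)^{n}.
\]
Plugging in $|a_p|\le A\log p/\log y$ and invoking Lemma~\ref{bound}(2) bounds the inner sum by $O(1)$, so the whole expression is $O(q)$, as claimed.

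For the second assertion, the same scheme applies to $S'(\chi)=\sum_{p<y}\frac{a_p'}{p}\chi(p^2)$: expanding gives $(S')^n=\sum_m c(m)\chi(m^2)$ with $c(m)$ analogous to $b(m)$ but with $\sqrt p$ replaced by $p$ in the denominator, and again $m<q$. The only twist is that orthogonality now produces the condition $m_1^2\equiv m_2^2\pmod q$, which for prime $q$ and $0<m_i<q$ gives either $m_1=m_2$ or $m_1+m_2=q$. I would handle the extra term by Cauchy--Schwarz,
\[
\Bigl|\sum_{m_1+m_2=q}c(m_1)\overline{c(m_2)}\Bigr|\le\sum_m|c(m)|^2,
\]
so that $\sideset{}{^*}\sum_\chi|S'(\chi)|^{2n}\le 2\varphi(q)\sum_m|c(m)|^2$. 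The multinomial rearrangement then gives $\sum_m|c(m)|^2\le n!\,\bigl(\sum_p|a_p'|^2/p^2\bigr)^n$, which under the hypothesis $|a_p'|\le A$ is bounded because $\sum_p p^{-2}$ converges, yielding the desired $O(q)$ bound.

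The plan has two places where care is needed. The first is the combinatorial passage from $\sum_m|b(m)|^2$ to $n!(\sum_p|a_p|^2/p)^n$: one must verify that the square of the multinomial coefficient divided by itself leaves at most a factor of $n!$, independent of the multiplicities $k_i$. The second, specific to the $\chi(p^2)$ sum, is the appearance of the sign-change orbit $m_1+m_2=q$; it turns out to contribute only a harmless factor of $2$ after Cauchy--Schwarz, but one must be careful that this is the only new contribution and that no further term arises from the subtracted ``$-1$'' in the orthogonality relation (it is $-|\sum_m c(m)|^2\le 0$ and can be discarded). Once these two points are settled, both bounds follow in a few lines.
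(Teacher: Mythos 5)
Your argument is correct. The paper itself gives no proof of this lemma, merely citing Selberg's Lemma 18; what you have written is precisely the standard orthogonality argument underlying that reference: expand to $\sum_m b(m)\chi(m)$ with $m<y^n\le q$, use $\sideset{}{^*}\sum_{\chi\bmod q}\chi(m_1)\overline{\chi}(m_2)=\varphi(q)\mathbf 1_{m_1=m_2}-1$ (valid since $q$ is prime and $(m_i,q)=1$), discard the nonpositive $-|\sum_m b(m)|^2$ term, bound the diagonal via the multinomial inequality $\frac{n!}{k_1!\cdots k_s!}\le n!$, and close with Lemma~\ref{bound}(2) (resp.\ $\sum_p p^{-2}<\infty$), while in the $\chi(p^2)$ case the extra orbit $m_1+m_2=q$ is absorbed by Cauchy--Schwarz at the cost of a factor $2$.
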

\begin{proof}
See Selberg \cite[Lemma 18]{Selberg2}.
\end{proof}

We use the following version of the argument principle
and the mollified second moment
to get the zero-density estimate 
for the twisted $L$-functions.
\begin{lemma}\label{The argument principle}
Let $\omega$ be a holomorphic function in a neighborhood of the domain $\Re(s) \geq \sigma'$, $t_1 \leq\Im(s) \leq t_2$. Assume that in this region it satisfies
\bna
\omega(s)=1+o\left(\exp\Big(-\frac{\pi}{t_2-t_1}\Re(s)\Big)\right),
\ena
uniformly as $\Re(s)\rightarrow+\infty$.
Denote the zeros of $\omega$ (in the interior of this region) by $\rho=\beta+\mathrm{i} \gamma$. Then we have
\begin{equation*}
\begin{split}
2(t_2-t_1)\sum_{\beta\geq\sigma'\atop t_1\leq\gamma\leq t_2}
&\sin\left(\pi\frac{\gamma-t_1}{t_2-t_1}\right)
\sinh\left(\pi\frac{\beta-\sigma'}{t_2-t_1}\right)\\
&=\int_{t_1}^{t_2}\sin\left(\pi\frac{t-t_1}{t_2-t_1}\right)
\log |\omega(\sigma'+\mathrm{i} t)|\mathrm{d}t\\
&\quad+\int_{\sigma'}^{+\infty}\sinh\left(\pi\frac{\beta-\sigma'}{t_2-t_1}\right)
(\log|\omega(\beta+\mathrm{i}  t_1)|+\log|\omega(\beta+\mathrm{i}  t_2)|)\mathrm{d}\beta,
\end{split}
\end{equation*}
where the zeros are counted according to multiplicity.
\end{lemma}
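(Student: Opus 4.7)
The plan is to recognize the summand as the imaginary part of a certain entire function of $\rho$, and to deduce the identity as a Green's-identity variant of Littlewood's formula.

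First, I would use the algebraic identity
\begin{equation*}
\sinh\!\Bigl(\pi\tfrac{\beta-\sigma'}{t_2-t_1}\Bigr)\sin\!\Bigl(\pi\tfrac{\gamma-t_1}{t_2-t_1}\Bigr) \;=\; \Im\,\cosh\!\Bigl(\pi\tfrac{\rho-\sigma'-\i t_1}{t_2-t_1}\Bigr),
\end{equation*}
which follows from $\cosh(a+\i b)=\cosh a\cos b + \i\sinh a\sin b$ upon writing $\rho=\beta+\i\gamma$. Set $L := t_2 - t_1$ and define
\begin{equation*}
\Psi(s) := \Im\,\cosh\!\Bigl(\tfrac{\pi}{L}(s-\sigma'-\i t_1)\Bigr) = \sinh\!\Bigl(\tfrac{\pi}{L}(\Re s - \sigma')\Bigr)\sin\!\Bigl(\tfrac{\pi}{L}(\Im s - t_1)\Bigr).
\end{equation*}
Then $\Psi$ is harmonic on $\mathbb{C}$, and one checks directly that $\Psi$ vanishes on the three sides $\Re s = \sigma'$, $\Im s = t_1$, and $\Im s = t_2$ of the rectangle $R_T := [\sigma',T]\times[t_1,t_2]$.

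Next I would apply Green's second identity to $\Psi$ and $u := \log|\omega|$ on $R_T$. By the Poincar\'e--Lelong formula, $\Delta u = 2\pi\sum_{\rho}\,m(\rho)\,\delta_\rho$ in the distributional sense, summed over the zeros of $\omega$ with multiplicity. Since $\Delta\Psi = 0$, Green's identity gives
\begin{equation*}
2\pi\sum_{\rho\in R_T}\Psi(\rho) \;=\; \int_{\partial R_T}\Bigl(\Psi\,\tfrac{\partial u}{\partial n} - u\,\tfrac{\partial \Psi}{\partial n}\Bigr)\mathrm{d}s,
\end{equation*}
with $\partial/\partial n$ the outward normal. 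The vanishing of $\Psi$ on three sides kills $\Psi\,\partial_n u$ there, and a direct computation of $\partial_n\Psi$ on the left, top, and bottom edges shows that the contributions of $-u\,\partial_n\Psi$ combine, with a common prefactor $\pi/L$ and a uniform $+$ sign, into precisely
\begin{equation*}
\tfrac{\pi}{L}\!\left[\,\int_{t_1}^{t_2}\!\sin\!\bigl(\tfrac{\pi(t-t_1)}{L}\bigr)\log|\omega(\sigma'+\i t)|\,\mathrm{d}t \;+\; \int_{\sigma'}^{T}\!\sinh\!\bigl(\tfrac{\pi(\beta-\sigma')}{L}\bigr)\bigl(\log|\omega(\beta+\i t_1)|+\log|\omega(\beta+\i t_2)|\bigr)\mathrm{d}\beta\,\right].
\end{equation*}

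The remaining step is to show that the right edge $\Re s = T$ contributes nothing as $T\to\infty$. The hypothesis $\omega(s)-1 = o\bigl(\exp(-\pi\Re s/L)\bigr)$ yields $\log|\omega(T+\i t)| = o(e^{-\pi T/L})$ directly, while a Cauchy-integral estimate applied to $\omega-1$ on a disk of fixed radius contained in the domain of holomorphy propagates the same decay rate to $\omega'$ and hence to $\partial_x\log|\omega|$. These bounds defeat the $\cosh(\pi T/L)$ and $\sinh(\pi T/L)$ growth that appear in $\Psi$ and $\partial_n\Psi$ on that edge, so the right-edge contribution tends to zero. Multiplying the resulting identity by $2L/\pi$ produces the claim. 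The only real bookkeeping difficulty is tracking the signs of the four outward normal derivatives of $\Psi$ so that all three surviving boundary integrals acquire a $+$ sign; beyond that, the argument is a standard Littlewood-type weighted identity tailored to the kernel $\sin\cdot\sinh$.
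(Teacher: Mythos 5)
Your proof is correct, and since the paper does not prove this lemma but simply cites Selberg's Lemma 14, your argument is a genuine alternative in presentation: Selberg's original proof is a weighted version of Littlewood's lemma, carried out by integrating $\log\omega(s)$ against the analytic kernel $\cosh\!\big(\pi(s-\sigma'-\i t_1)/(t_2-t_1)\big)$ around the rectangle $[\sigma',T]\times[t_1,t_2]$ and extracting logarithmic residues, whereas you recast this in real-variable language by taking $\Psi=\Im\cosh$ as a harmonic kernel, invoking $\Delta\log|\omega|=2\pi\sum_\rho m(\rho)\delta_\rho$, and applying Green's second identity. The two are the same computation in different clothing; the Green's-identity framing makes the sign bookkeeping transparent, since one checks directly that $\partial_n\Psi=-\tfrac{\pi}{L}\sin\!\big(\tfrac{\pi(t-t_1)}{L}\big)$ on $\Re s=\sigma'$ and $\partial_n\Psi=-\tfrac{\pi}{L}\sinh\!\big(\tfrac{\pi(\beta-\sigma')}{L}\big)$ on both horizontal edges, so all three of the $-u\,\partial_n\Psi$ integrals carry the same prefactor $\pi/L$ with a plus sign, as you assert. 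Two small remarks. First, the normalization in your last sentence is off by a factor of two: Green's identity gives $2\pi\sum_\rho\Psi(\rho)=\tfrac{\pi}{L}[\cdots]$ after $T\to\infty$, so you should multiply by $L/\pi$ (not $2L/\pi$) to reach $2L\sum\sin\sinh=[\cdots]$. Second, on the right edge the term $\Psi\,\partial_n u=\Psi\,\Re(\omega'/\omega)$ does require a decay bound on $\omega'$, and the Cauchy estimate you invoke uses the hypothesis $\omega-1=o(e^{-\pi\Re s/L})$ on a disk of fixed radius about $T+\i t$, which for $t$ near $t_1$ or $t_2$ protrudes outside the strip where the hypothesis is literally stated; this is a routine point (shrink the rectangle slightly, or note that the contour-integral form integrates $\log\omega$ itself and never differentiates $\omega$), but worth flagging as the one place where the Green's-identity route needs a touch more care than Selberg's original.
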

\begin{proof}
See Selberg \cite[Lemma 14]{Selberg2}.
\end{proof}

\begin{lemma}\label{second moment}
For any fixed $c$ with $0 < c< 1/360$,
define $L=q^c$ and the mollifier
\begin{equation*}
\begin{split}
M(s,f\otimes\chi)=\sum_{\ell\leq L}x_\ell\frac{\chi(\ell)}{\ell^{s}},
\end{split}
\end{equation*}
where
\begin{equation*}
\begin{split}
x_\ell=\mu_f(\ell)P\left(\frac{\log(L/\ell)}{\log L}\right)\delta_{\ell\leq L}\delta_{(\ell, r)=1}.
\end{split}
\end{equation*}
Here $\mu_f(n)$ denotes the convolution inverse of the Hecke eigenvalues $\lambda_f(n)$ and
\begin{equation*}
\begin{split}
P(x)=\begin{cases}
2x, \,&\text{if}\,\,\,0\leq x\leq 1/2,\\
1, \,&\text{if}\,\,\,1/2\leq x\leq 1.
\end{cases}
\end{split}
\end{equation*}
Then there exist constants $A > 0$ and $\eta=\eta(c)>0$ such that for any prime $q\geq2$, we have
\bna
\frac{1}{\varphi^*(q)}\;\;\sideset{}{^*}\sum_{\chi\bmod q}\;
\Big|L(\sigma+\i t,f\otimes\chi)M(\sigma+\i t,f\otimes\chi)-1\Big|^2
\ll(1+|t|)^Aq^{-\eta(\sigma-1/2)}
\ena
holds uniformly for $\sigma\geq1/2-1/\log q$,
where the implied constant depends only on $c$.
\end{lemma}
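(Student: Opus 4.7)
The plan is to prove this mollified second moment bound in the style of Selberg's original work on Dirichlet $L$-functions, combining a Dirichlet-series representation exploiting the convolution-inverse structure of the mollifier, orthogonality of primitive characters modulo the prime $q$, and Phragmén--Lindelöf interpolation to produce the factor $q^{-\eta(\sigma-1/2)}$. The central algebraic input is that $\mu_f$ is the Dirichlet convolution inverse of $\lambda_f$, so $\sum_{d\mid n}\mu_f(d)\lambda_f(n/d)=\delta_{n=1}$. For $\Re(s)>1$ this yields the expansion
\begin{equation*}
L(s,f\otimes\chi)\,M(s,f\otimes\chi) = 1 + \sum_{m\geq 2} a_m(f)\,\chi(m)\,m^{-s},
\end{equation*}
with $a_m(f)=\sum_{\ell\mid m,\,\ell\leq L,\,(\ell,r)=1}\mu_f(\ell)\,P(\log(L/\ell)/\log L)\,\lambda_f(m/\ell)$. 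Since $P\equiv 1$ on $[1/2,1]$, the coefficient $a_m(f)$ equals $(\mu_f\star\lambda_f)(m)=0$ for $2\leq m\leq\sqrt{L}$ coprime to $r$, and the remaining nonzero coefficients satisfy $|a_m(f)|\ll_\varepsilon m^\varepsilon$ by Deligne's bound; the subtracted $1$ in the lemma is thus exactly the constant Dirichlet coefficient of $LM$.

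The proof then bounds $\mathbb{E}|F_\chi(s)|^2$, with $F_\chi=LM-1$, on two endpoint lines. On a fixed line $\sigma_2>1$, I compute directly from the above Dirichlet series via orthogonality of primitive characters modulo the prime $q$: the diagonal is $\sum_{m>\sqrt{L}}|a_m(f)|^2 m^{-2\sigma_2}\ll L^{1-2\sigma_2+\varepsilon}=q^{-\eta_0}$ with $\eta_0=c(2\sigma_2-1)-\varepsilon>0$, and for $\sigma_2$ slightly larger than $1$ the off-diagonal $m\equiv n\bmod q$ with $m\neq n$ is dominated by the diagonal. On the line $\sigma_1=1/2-1/\log q$, I apply an approximate functional equation to $L(s,f\otimes\chi)$, yielding a main sum and a dual sum each smoothly truncated at length $\asymp q\sqrt{r}(1+|t|)$; multiplying by $M$ and computing the mollified second moment, the diagonal is bounded by the Rankin--Selberg estimate $\sum_{m\leq Y}|\lambda_f(m)|^2\ll Y\log^A Y$ to give $O((1+|t|)^A)$, and the off-diagonal pairs $m\equiv\pm n\bmod q$, $m\neq n$, $m,n\ll q^{1+c}(1+|t|)$ are made subdominant precisely by the constraint $c<1/360$.

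Finally, the three-lines principle applied pointwise to the entire function $F_\chi$ gives, for $\sigma\in[\sigma_1,\sigma_2]$ and $\theta=(\sigma-\sigma_1)/(\sigma_2-\sigma_1)$, the bound $|F_\chi(\sigma+\mathrm{i}t)|\leq|F_\chi(\sigma_1+\mathrm{i}t)|^{1-\theta}|F_\chi(\sigma_2+\mathrm{i}t)|^\theta$; Hölder's inequality in $\chi$ then yields
\begin{equation*}
\mathbb{E}|F_\chi(\sigma+\mathrm{i}t)|^2\leq \bigl(\mathbb{E}|F_\chi(\sigma_1+\mathrm{i}t)|^2\bigr)^{1-\theta}\bigl(\mathbb{E}|F_\chi(\sigma_2+\mathrm{i}t)|^2\bigr)^\theta \ll (1+|t|)^A\,q^{-\eta_0\theta},
\end{equation*}
producing the claimed exponent $-\eta(\sigma-1/2)$ with $\eta=\eta_0/(\sigma_2-1/2)$. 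I expect the main technical obstacle to be the off-diagonal analysis on the line $\sigma_1$, where one must carefully track the smooth cutoffs $V^\pm$, the oscillatory factor $(q\sqrt{r})^{1-2s}$ arising from the functional equation, and the resulting cross-terms between the main and dual sums; the numerical constraint $c<1/360$ reflects the break-even point in this analysis, and any relaxation would require sharper input on twisted second-moment sums of $\lambda_f$ in arithmetic progressions modulo $q$.
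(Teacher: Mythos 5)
The paper does not prove this lemma at all: the entire proof is a citation to Blomer--Fouvry--Kowalski--Michel--Mili\'cevi\'c--Sawin \cite[Theorem 8.5]{BFKMMS}, a 148-page AMS Memoir whose centerpiece is precisely this mollified second-moment bound for twisted Hecke $L$-functions. Your proposal attempts a self-contained reproof, which is a genuinely different (and far more ambitious) route, so it needs to be judged on its own terms --- and there are two problems.

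First, the interpolation step is wrong as written. Hadamard's three-lines theorem does \emph{not} give the pointwise estimate
$|F_\chi(\sigma+\mathrm{i}t)|\le|F_\chi(\sigma_1+\mathrm{i}t)|^{1-\theta}|F_\chi(\sigma_2+\mathrm{i}t)|^{\theta}$
at a fixed ordinate $t$; it compares the \emph{suprema} over full vertical lines. A concrete counterexample: for $F(s)=\cos s$ at $t=0$ one has $\tfrac{d^2}{d\sigma^2}\log|\cos\sigma|=-\sec^2\sigma<0$, so $\sigma\mapsto|F(\sigma)|$ is log-\emph{concave}, not log-convex. The step can be repaired, but not the way you set it up: one should represent $\log|F_\chi(\sigma_0+\mathrm{i}t_0)|$ via the harmonic measure of the strip, apply Jensen's inequality to pull the exponential through, and then H\"older over $\chi$; the Poisson kernel of the strip decays fast enough that the $(1+|t|)^A$ factor survives. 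This gives a valid convexity-of-means argument, but it is not what your three lines say.

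Second, and more fundamentally, your treatment of the line $\sigma_1=1/2-1/\log q$ names rather than supplies the hard analysis. After the approximate functional equation, the off-diagonal terms $m\equiv\pm n\bmod q$ with $m\neq n$ and $m,n\ll q^{1+c}(1+|t|)$ contribute (trivially) on the order of $q^{c}(1+|t|)^A$ --- there is no convergence without genuine cancellation. Extracting a power saving from these off-diagonal sums is exactly the content of \cite{BFKMMS}: one needs shifted-convolution estimates for Fourier coefficients of $f$ twisted by additive characters mod $q$, carried out via the $\delta$-method and the spectral theory of automorphic forms, and that is where the numerical constraint (encoded here as $c<1/360$) actually arises. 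Your sentence ``are made subdominant precisely by the constraint $c<1/360$'' is asserting, not proving, the deepest part of the lemma. In sum: your plan is structurally sensible and the $\sigma_2>1$ endpoint and the convolution-inverse algebra are fine, but the interpolation step contains a genuine error, and the $\sigma_1$ endpoint is an unfilled gap equivalent to the bulk of the cited memoir.
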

\begin{proof}
See Blomer \emph{et al.} \cite[Theorem 8.5]{BFKMMS}.
\end{proof}
\begin{remark}
In view of this lemma, for $s=\sigma+\i t$ with $\sigma\geq1/2-1/\log q$, we have
\begin{equation*}
\begin{split}
LM(s,f\otimes\chi):=L(s,f\otimes\chi)M(s,f\otimes\chi)=1+
O((1+|t|)^{A/2}q^{-\eta(\sigma-1/2)/2})
\end{split}
\end{equation*}
for each primitive Dirichlet character $\chi$ modulus $q$.
Thus $LM(s,f\otimes\chi)$ is non-vanishing for sufficiently large $\sigma$.
\end{remark}

Finally, we need the lemma of the theory of moments in probability theory (see, for example, Billingsley \cite[Theorem 30.2]{Billingsley}).
\begin{lemma}\label{probability theory}
Suppose that the distribution of random variable $X$ is determined by its moments, that the $X_n$ have moments of all orders, and that
$\lim_n E[X_n^r]=E[X^r]$ for $r=1,2,\cdots$. Then $X_n\Rightarrow X$.
\end{lemma}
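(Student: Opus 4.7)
The plan is to prove $X_n\Rightarrow X$ via the classical tightness-plus-unique-limit strategy, combining Prokhorov's theorem with a uniform-integrability argument that upgrades weak subsequential convergence to moment convergence. First I would establish tightness of $\{X_n\}$: since $E[X_n^2]\to E[X^2]$, the second moments are uniformly bounded, and Markov's inequality $P(|X_n|>K)\le E[X_n^2]/K^2$ makes the tails uniformly small in $n$. By Prokhorov's theorem every subsequence then has a further subsequence $\{X_{n_k}\}$ that converges weakly to some random variable $Y$, and the task is reduced to showing $Y\stackrel{d}{=}X$.

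To identify $Y$ I would show $E[Y^r]=E[X^r]$ for every $r\ge 1$. The key observation is that $\{X_{n_k}^{\,r}\}$ is uniformly integrable: the hypothesis supplies the bound $\sup_k E[|X_{n_k}|^{r+1}]<\infty$ (since $E[X_n^{r+1}]$ converges to $E[X^{r+1}]$), which via a de la Vall\'ee--Poussin type criterion yields uniform integrability of the $r$th powers. Combining $X_{n_k}\Rightarrow Y$ with this uniform integrability gives $E[Y^r]=\lim_k E[X_{n_k}^{\,r}]=E[X^r]$. Since by hypothesis the distribution of $X$ is determined by its moments, we conclude $Y\stackrel{d}{=}X$. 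Thus every subsequential weak limit of $\{X_n\}$ is distributed as $X$, and together with tightness this forces $X_n\Rightarrow X$.

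The main obstacle is the moment-matching step: weak convergence does not by itself transfer to convergence of moments, so one must always work with one more moment than is needed at any given stage. This is precisely why the hypothesis requires moments of \emph{all} orders rather than only the $r$th --- the bound on the $(r+1)$st moment is what buys uniform integrability of the $r$th powers, ruling out loss of mass to infinity in the weak limit. Once uniform integrability is in hand, both the identification $E[Y^r]=E[X^r]$ and the final appeal to moment-determinacy are formal, and the rest of the argument is the standard ``every subsequence has a subsubsequence converging to the same limit'' principle.
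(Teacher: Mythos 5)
The paper does not prove this lemma itself; it cites Billingsley's Theorem 30.2, and your argument is essentially the standard proof found there: tightness of $\{X_n\}$ from uniformly bounded second moments via Chebyshev, Helly/Prokhorov selection of a weak subsequential limit $Y$, moment identification $E[Y^r]=E[X^r]$ via uniform integrability, and then moment-determinacy forces $Y\stackrel{d}{=}X$, whence the subsequence principle yields $X_n\Rightarrow X$.

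One step needs repair. You assert $\sup_k E\bigl[|X_{n_k}|^{r+1}\bigr]<\infty$ on the grounds that $E[X_n^{r+1}]$ converges. When $r+1$ is odd this does not follow: $E[X_n^{r+1}]$ is a signed quantity and its boundedness says nothing about $E[|X_n|^{r+1}]$, since positive and negative tails can cancel. The correct move is to invoke an even moment of higher order, e.g. $E[X_n^{2r}]=E[|X_n|^{2r}]$, which converges by hypothesis and is therefore bounded in $n$; since $2r>r$, the de la Vall\'ee--Poussin criterion then gives uniform integrability of $\{|X_{n_k}|^{r}\}$, hence of $\{X_{n_k}^{r}\}$, and the rest of your identification $E[Y^r]=\lim_k E[X_{n_k}^r]=E[X^r]$ goes through as you describe. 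With that substitution the proof is complete and matches the cited source.
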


\section{Proof of Theorems \ref{zero-density} and \ref{main-theorem}}
\label{Prove Theorem 1.1}
\subsection{Proof of Theorem \ref{zero-density}}
In this subsection, we give the detailed proof of the zero-density estimates,
following Selberg \cite[Theorem 4]{Selberg2} and Kowalski \cite[Theorem 14]{K}.
It is sufficient to consider the case that $t_2-t_1=1/\log q$.
Let $L=q^c$ with $0<c<1/360$, and write
\bna
t_1'=t_1-\frac{\varpi}{\log q},\quad
t_2'=t_2+\frac{\varpi}{\log q},\quad
\sigma'=\sigma-\frac{1}{2\log q},
\ena
where $\varpi$ is a positive real number such that
\bna
\eta>\frac{\pi}{2\varpi+1},
\ena
with \( \eta = \eta(c) \) as in Lemma~\ref{second moment}.
For any $1/2+1/\log q\leq\sigma\leq 1$, we have
\begin{equation*}\begin{split}
N(f\otimes\chi;\sigma,t_1,t_2)
&:=\sum_{L(f\otimes\chi,\beta+\i \gamma)=0
\atop\beta\geq\sigma,\; t_1\leq\gamma\leq t_2}1
\leq2\log q\sum_{L(f\otimes\chi,\beta+\i \gamma)=0
\atop\beta\geq\sigma',\; t_1\leq\gamma\leq t_2}(\beta-\sigma')\\
&\leq 2(t_2'-t_1')\log q \sum_{L(f\otimes\chi,\beta+\i \gamma)=0
\atop\beta\geq\sigma',\; t_1'\leq\gamma\leq t_2'}
\sin\left(\pi\frac{\gamma-t_1'}{t_2'-t_1'}\right)
\sinh\left(\pi\frac{\beta-\sigma'}{t_2'-t_1'}\right).
\end{split}\end{equation*}
Since the zeros of $L(s,f\otimes\chi)$ are also zeros of
the function
\bna
1-(LM(s,f\otimes\chi)-1)^2,
\ena
we can apply Lemma \ref{The argument principle}
with
\bna
\omega(s)=1-(LM(s,f\otimes\chi)-1)^2,
\ena
and obtain that $N(f\otimes\chi;\sigma,t_1,t_2)$ is
\begin{equation*}\begin{split}
&\leq(\log q)\int_{t_1'}^{t_2'}
\sin\left(\pi\frac{t-t_1'}{t_2'-t_1'}\right)
\log |\omega(\sigma'+\i t)|\mathrm{d}t\\
&\quad+(\log q)\int_{\sigma'}^{+\infty}
\sinh\left(\pi\frac{\beta-\sigma'}{t_2'-t_1'}\right)
(\log|\omega(\beta+\i t_1')|+\log|\omega(\beta+\i t_2')|)\mathrm{d}\beta.
\end{split}\end{equation*}
Since
\bna
\log|1+x|\leq\log(1+|x|)\leq |x|, \quad\text{for}\,\,x\neq-1,
\ena
and by the definition \eqref{N(sigma,H)} and Lemma \ref{second moment}, $N(f;\sigma, t_1,t_2)$ is
\begin{equation*}\begin{split}
\leq&\,(\log q)\int_{t_1'}^{t_2'}\sin\left(\pi\frac{t-t_1'}{t_2'-t_1'}\right)
\frac{1}{\varphi^*(q)}\;\;\sideset{}{^*}\sum_{\chi\bmod q}
|LM(\sigma'+\i t,f\otimes\chi)-1|^2\mathrm{d}t\\
&+(\log q)\sum_{j=1}^2\int_{\sigma'}^{\infty}
\sinh\left(\pi\frac{\beta-\sigma'}{t_2'-t_1'}\right)
\frac{1}{\varphi^*(q)}\;\;\sideset{}{^*}\sum_{\chi\bmod q}
|LM(\beta+\i t_j',f\otimes\chi)-1|^2\mathrm{d}\beta\\
\ll&\,(1+|t_1|+|t_2|)^A(t_2-t_1)q^{-\eta(\sigma-1/2)}
\log q.
\end{split}\end{equation*}
This completes the proof of Theorem \ref{zero-density}.

\subsection{Proof of Theorem \ref{main-theorem}.}
For a positive parameter $x$ (to be chosen later), let
\bea\label{MR}
M(t,f\otimes\chi):=\frac{1}{\pi}\Im\sum_{p\leq x^3}\frac{C_f(p)\chi(p)}{p^{1/2+\i t}},\quad
\text{and}\quad
R(t,f\otimes\chi):=S(t,f\otimes\chi)-M(t,f\otimes\chi),
\eea
where we recall from \eqref{Deligne bound} and \eqref{Log derivative coefficient}
that $C_f(p)=\alpha_f(p)+\beta_f(p)=\lambda_f(p)$.
We now state the following key proposition,
whose detailed proof will be presented in Section~\ref{Proof of the main theorem}.
\begin{proposition}\label{main propsition}
Let $t>0$ be given. For prime number $q$ sufficiently large and $x=q^{\delta/3}$ with sufficiently small $\delta>0$, we have
\bea\label{M(t,f) moment}
\frac{1}{\varphi^*(q)}\;\;\sideset{}{^*}\sum_{\chi \bmod q}M(t,f\otimes\chi)^{2n}
=\frac{(2n)!}{n!(2\pi)^{2n}}\left(\log\log q\right)^{n}
+O\left(\left(\log\log q\right)^{n-1}\right),
\eea
and
\bea\label{R(t,f) moment}
\frac{1}{\varphi^*(q)}\;\;\sideset{}{^*}\sum_{\chi \bmod q}|R(t,f\otimes\chi)|^{2n}
=O_{t,n}(1).
\eea
\end{proposition}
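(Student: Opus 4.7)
The plan is to handle the two displayed identities separately: \eqref{M(t,f) moment} by direct moment expansion and character orthogonality, and \eqref{R(t,f) moment} by combining the approximation of $S(t,f\otimes\chi)$ developed in Section~\ref{approximation of S(t)} with Theorem~\ref{zero-density}.

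Write $S(\chi):=\sum_{p\le x^{3}}\lambda_{f}(p)\chi(p)p^{-1/2-\i t}$, so that $\pi M(t,f\otimes\chi)=(S(\chi)-\overline{S(\chi)})/(2\i)$. Expanding by the binomial theorem,
\[
M(t,f\otimes\chi)^{2n}=\frac{(-1)^{n}}{(2\pi)^{2n}}\sum_{k=0}^{2n}\binom{2n}{k}(-1)^{2n-k}\,S(\chi)^{k}\overline{S(\chi)}^{2n-k},
\]
and each summand unfolds into a $2n$-fold sum over primes $p_{1},\dots,p_{2n}\le x^{3}$ carrying the factor $\chi(p_{1}\cdots p_{k})\overline{\chi(p_{k+1}\cdots p_{2n})}$. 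The orthogonality of primitive characters modulo the prime $q$,
\[
\frac{1}{\varphi^{*}(q)}\sideset{}{^*}\sum_{\chi\bmod q}\chi(a)\overline{\chi(b)}=\delta_{a\equiv b\bmod{q}}+O(1/q)\qquad((ab,q)=1),
\]
combined with $x^{3}=q^{\delta}$ for $\delta$ small, gives $p_{1}\cdots p_{k}$ and $p_{k+1}\cdots p_{2n}$ both strictly less than $q$, so the congruence forces equality of the two products, which by unique factorization forces the two multisets to coincide; in particular $k=n$. Isolating the $n$-tuples with all primes distinct (each unordered set appearing $n!\times n!$ times among the ordered pairs) and invoking Lemma~\ref{bound}(3), namely $\sum_{p\le x^{3}}|\lambda_{f}(p)|^{2}/p=\log\log q+O(1)$, produces the main term
\[
\frac{1}{(2\pi)^{2n}}\binom{2n}{n}\,n!\,(\log\log q)^{n}=\frac{(2n)!}{n!(2\pi)^{2n}}(\log\log q)^{n},
\]
while tuples with repeated primes and the $O(1/q)$ orthogonality tail (controlled via $|\lambda_{f}(p)|\le 2$ and the crude count of at most $q^{2n\delta}$ tuples, chosen negligible against $q$) together contribute $O((\log\log q)^{n-1})$.

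For \eqref{R(t,f) moment}, the plan is to invoke the approximation formula to be proved in Section~\ref{approximation of S(t)}, which represents $\pi S(t,f\otimes\chi)$ as the short prime sum defining $\pi M(t,f\otimes\chi)$ plus a correction bounded by (i) prime-square tails together with the tail over $p>x^{3}$, handled deterministically via \eqref{Deligne bound} and Lemma~\ref{bound}, and (ii) a sum over the nontrivial zeros of $L(s,f\otimes\chi)$ with real part exceeding $1/2+1/\log q$ in a height window of size $O(1)$ around $t$. Raising to the $2n$-th power and averaging over $\chi$, the zero-theoretic piece reduces to an integral of a positive function against $N(f;\sigma,t_{1},t_{2})$, and the bound $q^{-\eta(\sigma-1/2)}\log q$ of Theorem~\ref{zero-density} converts the on-average sparsity of such zeros into an $O_{t,n}(1)$ upper bound.

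The main obstacle is precisely this second part: one must produce, unconditionally and uniformly in $\chi$, an explicit-formula-style identity for $\log L(1/2+\i t,f\otimes\chi)$ whose zero-sum is amenable to the averaging supplied by Theorem~\ref{zero-density}, and then arrange the $2n$-th moment so that the on-average density bound is not devoured by the entropy of the $2n$-fold integration over zero contributions. Since GRH is unavailable, individual $|R(t,f\otimes\chi)|$ cannot be controlled; the $q^{-\eta(\sigma-1/2)}$ savings in the zero count is the decisive input, and the combinatorial bookkeeping that converts it into a bounded (rather than $\log\log q$-growing) moment bound is the principal technical content delegated to Section~\ref{Proof of the main theorem}.
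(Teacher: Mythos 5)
Your treatment of \eqref{M(t,f) moment} is correct and follows the paper's approach: expand $M^{2n}$ binomially, use orthogonality of characters modulo $q$ together with the smallness of $x^3=q^{\delta}$ to force $p_1\cdots p_m = p_{m+1}\cdots p_{2n}$ (hence $m=n$ with matching multisets of primes), and evaluate the diagonal via Lemma~\ref{bound}(3). The only cosmetic difference is that you apply orthogonality over primitive characters directly, absorbing the lone principal character into an $O(1/q)$ tail, whereas the paper sums over all characters and subtracts the principal one afterward. Your crude count of $q^{2n\delta}$ tuples for the tail would force $\delta<1/(2n)$; the weighted bound $\bigl(\sum_{p\le x^3}p^{-1/2}\bigr)^{2n}\ll q^{n\delta}(\log x)^{-2n}$ gives more room, but since only ``sufficiently small $\delta$'' is asserted this is immaterial.

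For \eqref{R(t,f) moment}, however, the proposal identifies the right high-level plan but does not contain a proof, and it mischaracterizes the approximation in a way that would matter if one tried to carry it through. The remainder $R$ is not bounded by ``a sum over the nontrivial zeros $\ldots$ in a height window of size $O(1)$ around $t$,'' nor is the mechanism an ``explicit-formula-style identity $\ldots$ whose zero-sum is amenable to the averaging.'' Proposition~\ref{theorem 1} instead introduces the character- and $t$-dependent abscissa $\sigma_x=\tfrac{1}{2}+2\max_\rho\{|\beta-\tfrac{1}{2}|,\,5/\log x\}$, where $\rho=\beta+\i\gamma$ ranges over zeros with $|t-\gamma|\le x^{3|\beta-1/2|}/\log x$ (a window whose width varies with $\beta$, not a fixed $O(1)$ window), and expresses $S(t,f\otimes\chi)$ as a Dirichlet polynomial at $\sigma_x+\i t$ plus error terms carrying the factor $\sigma_x-1/2$. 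The entire zero-density input is then packaged into Lemma~\ref{lemma 6}: the average over $\chi$ of $(\sigma_x-1/2)^{4n}x^{4n(\sigma_x-1/2)}$ is $\ll(\log q)^{-4n}$, proved by slicing zeros dyadically in $\beta-1/2$, comparing each slice to $N(f;\tfrac{1}{2}+\tfrac{j}{\log x},\,t\pm\tfrac{e^{3(j+1)}}{\log x})$, and checking that the savings $q^{-\eta j/\log x}=e^{-3\eta j/\delta}$ of Theorem~\ref{zero-density} absorbs the weights $(j+1)^{4n}e^{8n(j+1)}(1+|t|+e^{3(j+1)}/\log x)^A e^{3(j+1)}$ once $\delta<3\eta/(8n+3A+3)$. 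Lemma~\ref{lemma 5} then decomposes $R$ into pieces handled by Selberg's Lemma~\ref{lem18} and by Cauchy--Schwarz against Lemma~\ref{lemma 6}. These three ingredients --- the quantity $\sigma_x$, the moment bound of Lemma~\ref{lemma 6}, and the decomposition in Lemma~\ref{lemma 5} --- constitute the actual content of the proof of \eqref{R(t,f) moment}, and none of them appears in your proposal; as written the claim that the zero-density estimate ``converts the on-average sparsity of such zeros into an $O_{t,n}(1)$ bound'' is a statement of what must be proved rather than an argument.
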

Now we begin to give the proof of Theorem \ref{main-theorem}.
By the binomial theorem, we write
\begin{equation}
\begin{split}\label{after binomia tm}
S(t,f\otimes\chi)^{2n}
=M(t,f\otimes\chi)^{2n}
+O_n\left(\sum_{\ell=1}^{2n}|M(t,f\otimes\chi)|^{2n-\ell}
|R(t,f\otimes\chi)|^{\ell}\right),
\end{split}
\end{equation}
where $M(t,f\otimes\chi)$ and $R(t,f\otimes\chi)$ are defined in \eqref{MR}.
Then we have
\begin{equation*}
\begin{split}
\frac{1}{\varphi^*(q)}\;\;\sideset{}{^*}\sum_{\chi(\text{{\rm mod }} q)}S(t,f\otimes\chi)^{2n}
&=\frac{1}{\varphi^*(q)}\;\;\sideset{}{^*}\sum_{\chi(\text{{\rm mod }} q)}M(t,f\otimes\chi)^{2n}\\
&\quad+O_n\left(\sum_{\ell=1}^{2n}\;\;\frac{1}{\varphi^*(q)}\;\;\sideset{}{^*}\sum_{\chi(\text{{\rm mod }} q)}
|M(t,f\otimes\chi)|^{2n-\ell}|R(t,f\otimes\chi)|^{\ell}\right).
\end{split}
\end{equation*}
For $1\leq\ell\leq 2n$, we apply the generalized H\"{o}lder's inequality and Proposition \ref{main propsition}, and get
\begin{equation*}
\begin{split}
&\frac{1}{\varphi^*(q)}\;\;\sideset{}{^*}\sum_{\chi(\text{{\rm mod }} q)}
|M(t,f\otimes\chi)|^{2n-\ell}|R(t,f\otimes\chi)|^{\ell}\\
&\ll\left(\frac{1}{\varphi^*(q)}\;\;\sideset{}{^*}\sum_{\chi(\text{{\rm mod }} q)}|M(t,f\otimes\chi)|^{2n}\right)^{\frac{2n-\ell}{2n}}
\left(\frac{1}{\varphi^*(q)}\;\;\sideset{}{^*}\sum_{\chi(\text{{\rm mod }} q)}
|R(t,f\otimes\chi)|^{2n}\right)^{\frac{\ell}{2n}}\\
&\ll_{t,n}\left(\log\log q\right)^{n-\frac{\ell}{2}}
\ll_{t,n}\left(\log\log q\right)^{n-\frac{1}{2}}.
\end{split}
\end{equation*}
Then the asymptotic formula \eqref{main result} follows from \eqref{M(t,f) moment}, \eqref{after binomia tm}
and the above estimate.

For the second assertion of Theorem \ref{main-theorem},
note that the $2n$-th moment of $\mu_q$ is
\bna
\int_{\mathbb{R}}\xi^{2n}\mathrm{d}\mu_q(\xi)
=\frac{1}{\varphi^*(q)}\;\;\sideset{}{^*}\sum_{\chi\bmod q}\left(\frac{S(t,f\otimes\chi)}{\sqrt{\log\log q}}\right)^{2n}.
\ena
Using the asymptotic formula \eqref{main result}, we obtain that for all $n$,
\bna
\lim_{q\rightarrow \infty}
\int_{\mathbb{R}}\xi^{2n}\mathrm{d}\mu_q(\xi)
=\frac{(2n)!}{n!(2\pi)^{2n}}
=\sqrt{\pi}\int_{\mathbb{R}}\xi^{2n} \exp(-\pi^2\xi^2)\mathrm{d}\xi.
\ena
By the theory of moments in probability theory (see Lemma \ref{probability theory}),
we complete the proof of Theorem \ref{main-theorem}.

\section{An approximation for $S(t,f\otimes\chi)$}\label{approximation of S(t)}
In this section we will prove several lemmas and derive an approximation for $S(t,f\otimes\chi)$.
We denote by $\rho=\beta+\i \gamma$ a typical zero of $L(s,f\otimes\chi)$ inside the critical strip, i.e., $0<\beta<1$.
For $\Re(s)>1$, we have the Dirichlet series expansion for the logarithmic derivative of $L(s,f\otimes\chi)$,
\bea\label{Log derivative definition}
-\frac{L'}{L}(s,f\otimes\chi)
=\sum_{n=1}^\infty \frac{\Lambda(n)C_f(n)\chi(n)}{n^s},
\eea
where $\Lambda(n)$ denotes the von Mangoldt function, and
\bea\label{Log derivative coefficient}
C_f(n)=\begin{cases}
\alpha_f(p)^m+\beta_f(p)^m, \,&\text{if}\,\,n=p^m\,\,\text{for a prime}\,\,p,\\
0, \,& \text{otherwise}.
\end{cases}
\eea

\begin{lemma}\label{lemma 1}
Let $x>1$. For $s\neq \rho$ and $s\neq -m-\frac{k-1}{2}$ with $m\geq 0$, we have the following identity
\begin{equation*}
\begin{split}
\frac{L'}{L}(s,f\otimes\chi)
=&-\sum_{n\leq x^3} \frac{C_f(n)\Lambda_x(n)\chi(n)}{n^s}
-\frac{1}{\log^2x}\sum_\rho\frac{x^{\rho-s}(1-x^{\rho-s})^2}{(\rho-s)^3}\\
&-\frac{1}{\log^2x}\sum_{m=0}^\infty
\frac{x^{-m-\frac{k-1}{2}-s}(1-x^{-m-\frac{k-1}{2}-s})^2}
{(-m-\frac{k-1}{2}-s)^3},
\end{split}
\end{equation*}
where
\begin{equation*}
\begin{split}
\Lambda_x(n)=\begin{cases}
\Lambda(n), \,&\textit{if }\,\,n\leq x,\\
\Lambda(n)\frac{\log^2(x^3/n)-2\log^2(x^2/n)}{2\log^2x}, \,& \textit{if }\,\,x\leq n\leq x^2,\\
\Lambda(n)\frac{\log^2(x^3/n)}{2\log^2x}, \,& \textit{if }\,\,x^2\leq n\leq x^3,\\
0, \,& \textit{if }\,\,n\geq x^3.\\
\end{cases}
\end{split}
\end{equation*}
\end{lemma}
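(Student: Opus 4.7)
\textbf{Proof plan for Lemma~\ref{lemma 1}.}

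The strategy is a standard Mellin--Perron explicit formula: I will express the weighted Dirichlet coefficient $\Lambda_x(n)$ as the inverse Mellin transform of a kernel, evaluate one contour integral via the Dirichlet series of $L'/L$, and then recover the same integral by shifting the contour far to the left and summing residues.

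\textbf{Step 1: Choice of kernel.} Define
\[
K(w)=\frac{x^{3w}-2x^{2w}+x^{w}}{w^{3}\log^{2}x}.
\]
A direct application of the familiar identity $\frac{1}{2\pi\i}\int_{(c)}\frac{y^{w}}{w^{3}}\dd w=\tfrac{1}{2}(\log y)^{2}\mathbf{1}_{y\geq 1}$ to each of the three terms, with $y=x^{3}/n,\ x^{2}/n,\ x/n$, produces exactly the piecewise quadratic weight $\Lambda_{x}(n)/\Lambda(n)$ in the four ranges $n\leq x,\ x\leq n\leq x^{2},\ x^{2}\leq n\leq x^{3},\ n\geq x^{3}$. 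In particular, for $n\leq x$ the three contributions telescope to $\tfrac{1}{2}\cdot 2\log^{2}x=\log^{2}x$, giving weight $1$.

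\textbf{Step 2: Representation by a contour integral.} Take $\sigma_{0}>0$ large enough that $\Re(s)+\sigma_{0}>1$, so that $-L'/L(s+w,f\otimes\chi)$ is given by its Dirichlet series \eqref{Log derivative definition} on $\Re(w)=\sigma_{0}$. Using Step~1 and Fubini, I obtain
\[
\frac{1}{2\pi\i}\int_{(\sigma_{0})}\Bigl(-\frac{L'}{L}(s+w,f\otimes\chi)\Bigr)K(w)\,\dd w
=\sum_{n\leq x^{3}}\frac{C_{f}(n)\Lambda_{x}(n)\chi(n)}{n^{s}}.
\]
Absolute convergence is automatic thanks to the rapid decay of $K(w)$ on vertical lines and the absolute convergence of the Dirichlet series.

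\textbf{Step 3: Shift to $\Re(w)=-U$ and collect residues.} I push the contour from $\Re(w)=\sigma_{0}$ to $\Re(w)=-U$ for $U\to\infty$. The integrand is meromorphic with the following poles, none of which coincide with $w=0$ or with the nontrivial and trivial zeros of $L(\cdot,f\otimes\chi)$ hitting the value $s$ is excluded by hypothesis:
\begin{itemize}
\item At $w=0$: the numerator $x^{3w}-2x^{2w}+x^{w}$ has a double zero (values, first derivative vanish; second derivative equals $2\log^{2}x$), so $K$ has a simple pole of residue $1$ there. The residue of the full integrand is therefore $-L'/L(s,f\otimes\chi)$.
\item At each nontrivial zero $\rho$ of $L(\cdot,f\otimes\chi)$: $w=\rho-s$ is a simple pole of $-L'/L(s+w,\cdot)$ with residue $-1$ (counted with multiplicity in case of higher order), producing
\[
-\frac{1}{\log^{2}x}\,\frac{x^{\rho-s}(1-x^{\rho-s})^{2}}{(\rho-s)^{3}},
\]
after using the factorization $x^{3a}-2x^{2a}+x^{a}=x^{a}(x^{a}-1)^{2}$.
\item At each trivial zero $w=-m-\tfrac{k-1}{2}-s$, $m\geq 0$, determined by the $\Gamma$-factor in the functional equation for $\Lambda(s,f\otimes\chi)$: the same computation yields the third sum in the statement.
\end{itemize}

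\textbf{Step 4: Vanishing of the tail integral.} The main technical point is to show that the integral on $\Re(w)=-U$ tends to $0$ as $U\to\infty$. For this I apply the functional equation to write
\[
\frac{L'}{L}(s+w,f\otimes\chi)=-\frac{L'}{L}(1-s-w,f\otimes\overline{\chi})-\log\!\Big(\tfrac{q\sqrt{r}}{2\pi}\Big)^{2}-\frac{\Gamma'}{\Gamma}(s+w+\tfrac{k-1}{2})-\frac{\Gamma'}{\Gamma}(1-s-w+\tfrac{k-1}{2}),
\]
so that on $\Re(w)=-U$ with $U$ large, $L'/L(1-s-w,\cdot)$ is absolutely bounded by its Dirichlet series. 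Meanwhile $K(w)$ on $\Re(w)=-U$ decays like $x^{-U}/|w|^{3}$, and the Gamma contribution grows only polynomially in $|w|$. Combined with standard vertical-line estimates between horizontal segments (at heights avoiding zeros), this forces the integral to vanish in the limit. The sums over nontrivial and trivial zeros converge absolutely because $\sum_{\rho}|\rho-s|^{-3}<\infty$ (by the classical zero-counting bound $N(T+1,f\otimes\chi)-N(T,f\otimes\chi)\ll\log(q(|T|+2))$) and the trivial zeros are spaced by $1$.

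\textbf{Expected main obstacle.} Everything else is routine; the only delicate point is the uniform control needed for the contour shift in Step~4. One must choose the horizontal segments along which to close the contour so that they stay at a definite distance from all nontrivial zeros (possible by a standard pigeonhole in short vertical intervals) and then combine the functional equation with Stirling for the $\Gamma$-factors to get enough decay in $|\Im(w)|$ while $\Re(w)\to -\infty$. Once this is in place, equating the integral of Step~2 with the sum of residues from Step~3 and rearranging gives exactly the stated identity.
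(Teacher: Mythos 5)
Your proposal is correct and follows essentially the same route as the paper: the same kernel $\frac{x^u(1-x^u)^2}{u^3\log^2 x}$ (you write it in expanded form $x^{3u}-2x^{2u}+x^u$), the same Mellin--Perron representation of the smoothed sum, and the same contour shift picking up residues at $u=0$, $u=\rho-s$, and the trivial zeros. You supply more detail than the paper on the residue bookkeeping and the justification of the contour shift, but the argument is the same.
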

\begin{proof}
Recall the discontinuous integral
\bea\label{discontinuous integral}
\frac{1}{2\pi \i}\int_{(\alpha)}\frac{y^s}{s^3}\mathrm{d}s=
\begin{cases}
\frac{\log^2 y}{2}, \,&\textit{if }\,\, y\geq 1,\\
0, \,& \textit{if }\,\, 0<y\leq 1
\end{cases}
\eea
for $\alpha>0$.
It follows from \eqref{Log derivative definition} and \eqref{discontinuous integral} that
\bna
-\log^2x\sum_{n=1}^\infty \frac{C_f(n)\Lambda_x(n)\chi(n)}{n^s}
=\frac{1}{2\pi \i}\int_{(\alpha)}\frac{x^u(1-x^u)^2}{u^3}
\frac{L'}{L}(s+u,f\otimes\chi)\mathrm{d}u,
\ena
where $\alpha=\max \{2, 1+\Re(s)\}$.
By shifting the line of integration all way to the left, we pick up the residues at $u=0$, $u=\rho-s$ and $u=-m-\frac{k-1}{2}-s$ ($m= 0,1,2, \cdots$) and deduce that
\begin{equation*}
\begin{split}
&\frac{1}{2\pi \i}\int_{(\alpha)}\frac{x^u(1-x^u)^2}{u^3}
\frac{L'}{L}(s+u,f\otimes\chi)\mathrm{d}u\\
&=\frac{L'}{L}(s,f\otimes\chi)\log^2x
+\sum_\rho\frac{x^{\rho-s}(1-x^{\rho-s})^2}{(\rho-s)^3}
+\sum_{m=0}^\infty
\frac{x^{-m-\frac{k-1}{2}-s}(1-x^{-m-\frac{k-1}{2}-s})^2}
{(-m-\frac{k-1}{2}-s)^3}.
\end{split}
\end{equation*}
Thus we complete the proof of the lemma.
\end{proof}

\begin{lemma}\label{lemma 2}
For $s=\sigma+\i t$, $s'=\sigma'+\i t'$ such that $1/2\leq \sigma, \sigma'\leq 10$, $s\neq \rho$, $s'\neq \rho$,
we have
\bna\label{Im}
\Im\left(\frac{L'}{L}(s,f\otimes\chi)-\frac{L'}{L}(s',f\otimes\chi)\right)
=\Im\sum_\rho\left(\frac{1}{s-\rho}-\frac{1}{s'-\rho}\right)+O(1),
\ena
and
\bea\label{Re}
\Re\frac{L'}{L}(s,f\otimes\chi)
=\sum_\rho\frac{\sigma-\beta}{(\sigma-\beta)^2+(t-\gamma)^2}
+O(\log(q(|t|+3))).
\eea
\end{lemma}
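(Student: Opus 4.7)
The plan is to derive both identities from the Hadamard factorization of the completed $L$-function together with the functional equation. Since $\Lambda(s, f \otimes \chi)$ is entire of order $1$, Hadamard's theorem gives
\begin{equation*}
\Lambda(s, f \otimes \chi) = e^{A(f,\chi) + B(f,\chi)s} \prod_\rho \left(1 - \frac{s}{\rho}\right) e^{s/\rho},
\end{equation*}
where $\rho$ runs over the nontrivial zeros of $L(s, f \otimes \chi)$. Taking logarithmic derivatives and using the explicit gamma factor in $\Lambda(s, f \otimes \chi)$, one obtains the working identity
\begin{equation*}
\frac{L'}{L}(s, f \otimes \chi) = -\log\frac{q\sqrt{r}}{2\pi} - \frac{\Gamma'}{\Gamma}\!\left(s + \frac{k-1}{2}\right) + B(f,\chi) + \sum_\rho \left(\frac{1}{s-\rho} + \frac{1}{\rho}\right),
\end{equation*}
where the sum over $\rho$ is interpreted in the usual symmetric sense.

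For the first identity I would subtract the working identity at $s$ and $s'$: the real constant $\log(q\sqrt{r}/(2\pi))$, the constant $B(f,\chi)$, and the $\sum_\rho 1/\rho$ contribution all cancel. Since $1/2 \le \sigma, \sigma' \le 10$, the imaginary part of $\frac{\Gamma'}{\Gamma}(s + (k-1)/2) - \frac{\Gamma'}{\Gamma}(s' + (k-1)/2)$ is absorbed into the $O(1)$ error by standard Stirling asymptotics in the range of $t, t'$ under consideration, producing the claimed formula.

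For the real part formula I would exploit the functional equation $\Lambda(s, f \otimes \chi) = \varepsilon(f \otimes \chi) \Lambda(1-s, f \otimes \bar\chi)$ together with the Dirichlet-series conjugation relation $\overline{\Lambda(\bar s, f \otimes \chi)} = \Lambda(s, f \otimes \bar\chi)$ to deduce that the zeros of $L(s, f \otimes \chi)$ are invariant under $\rho \mapsto 1 - \bar\rho$, and to compare the two Hadamard products, yielding the classical identity $\Re B(f,\chi) = -\sum_\rho \Re(1/\rho)$. Substituting this into the working identity and taking real parts leaves
\begin{equation*}
\Re\frac{L'}{L}(s, f \otimes \chi) = -\log\frac{q\sqrt{r}}{2\pi} - \Re\frac{\Gamma'}{\Gamma}\!\left(s + \frac{k-1}{2}\right) + \sum_\rho \frac{\sigma - \beta}{(\sigma - \beta)^2 + (t - \gamma)^2}.
\end{equation*}
Stirling then gives $\Re\frac{\Gamma'}{\Gamma}(s + (k-1)/2) \ll \log(|t|+3)$, and combining with the $\log q$ term yields the stated error $O(\log(q(|t|+3)))$.

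The main technical subtlety is the justification of $\Re B(f, \chi) = -\sum_\rho \Re(1/\rho)$: the series $\sum_\rho 1/\rho$ is only conditionally convergent, so one must work with the symmetric partial sum over $|\gamma| \le T$, use the pairing $\rho \leftrightarrow 1 - \bar\rho$ afforded by the functional equation to control the tail as $T \to \infty$, and pass to the limit. This is the one step in the argument that requires genuine care; the remaining estimates are routine consequences of Stirling's formula and the bounded-strip hypothesis on $\sigma, \sigma'$.
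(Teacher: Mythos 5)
Your route coincides with the paper's: both start from the Hadamard factorization of $\Lambda(s,f\otimes\chi)$, write the resulting working identity for $L'/L$, and then estimate the digamma contributions, with the first display obtained by subtraction and the second by taking real parts using the constant identity $\Re b(f\otimes\chi)=-\Re\sum_\rho 1/\rho$. The one place you diverge is that constant identity itself: the paper simply invokes Iwaniec--Kowalski, Proposition 5.7(3), whereas you propose to re-derive it from the functional equation together with the conjugation symmetry $\rho\mapsto 1-\bar\rho$. That is more self-contained, but it is exactly the proof of the cited proposition, so nothing structurally different is gained.

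One phrase deserves correction. You justify
\[
\Im\left(\frac{\Gamma'}{\Gamma}\Bigl(s+\tfrac{k-1}{2}\Bigr)-\frac{\Gamma'}{\Gamma}\Bigl(s'+\tfrac{k-1}{2}\Bigr)\right)=O(1)
\]
by ``standard Stirling asymptotics in the range of $t, t'$ under consideration,'' but the lemma places no restriction on $t,t'$, so the bound must be uniform. It is uniform: the paper gets it directly from the Weierstrass series, $\Im\frac{\Gamma'}{\Gamma}(u+\i v)=\sum_{m\ge1}\frac{v}{(m+u-1)^2+v^2}\ll 1$ for $u\ge 1/4$, and Stirling gives the same since $\Im\log z=\arg z$ stays bounded in the right half-plane. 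Your wording suggests a bounded-$t$ hypothesis that is not available; replace it with the uniform statement and the argument is complete.
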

\begin{proof}
By the Hadamard factorization theorem of the entire function $\Lambda(s,f\otimes\chi)$, we have
\bna
\frac{L'}{L}(s,f\otimes\chi)=b(f\otimes\chi)
+\sum_\rho\left(\frac{1}{s-\rho}+\frac{1}{\rho}\right)
-\frac{\Gamma'}{\Gamma}\left(s+\frac{k-1}{2}\right)
-\log\left(\frac{q\sqrt{r}}{2\pi}\right),
\ena
for some $b(f\otimes\chi)\in \mathbb{C}$ with $\Re(b(f\otimes\chi))=-\Re\sum_\rho\frac{1}{\rho}$ (see \cite[Proposition 5.7 (3)]{IK}).
Note that for $z\not\in -\mathbb{N}$,
\bea\label{log derivative gamma}
\frac{\Gamma'}{\Gamma}(z)=-\gamma+\sum_{m\geq 1}\left(\frac{1}{m}-\frac{1}{m-1+z}\right),
\eea
where $\gamma$ is the Euler constant.
Using \eqref{log derivative gamma}, we get that for $\frac{1}{4}\leq u$,
\bna
\Im\frac{\Gamma'}{\Gamma}(u+\i v)=\sum_{m\geq 1}\frac{v}{(m+u-1)^2+v^2}\ll 1.
\ena
Combining with the asymptotic formula of the logarithmic derivative of $\Gamma(z)$,
\bna\label{derivative gamma spectral}
\frac{\Gamma'}{\Gamma}(z)=\log z+O_\varepsilon\left(\frac{1}{|z|}\right),
\quad\text{for}\,\, |\arg z|\leq \pi-\varepsilon,
\ena
we complete the proof of lemma.
\end{proof}

Let $x\geq 4$, we define
\bea\label{sigma_x}
\sigma_x=\sigma_{x,f\otimes\chi,t}
:=\frac{1}{2}+2\max_\rho\left\{\left|\beta-\frac{1}{2}\right|, \frac{5}{\log x}\right\},
\eea
where $\rho=\beta+\i \gamma$ runs through the zeros of $L(s,f\otimes\chi)$ for which
\bea\label{condition 1}
|t-\gamma|\leq\frac{x^{3|\beta-1/2|}}{\log x}.
\eea

We have the following lemma.
\begin{lemma}\label{lemma 3}
Let $x\geq 4$. For $\sigma\geq \sigma_x$, we have
\begin{equation*}
\begin{split}
\frac{L'}{L}(\sigma+\i t,f\otimes\chi)
=&-\sum_{n\leq x^3} \frac{C_f(n)\Lambda_x(n)\chi(n)}{n^{\sigma+\i t}}
+O\left(x^{1/4-\sigma/2}\left|\sum_{n\leq x^3} \frac{C_f(n)\Lambda_x(n)\chi(n)}{n^{\sigma_x+\i t}}\right|\right)\\
&+O\left(x^{1/4-\sigma/2}\log(q(|t|+3))\right)
\end{split}
\end{equation*}
and
\bna
\sum_\rho\frac{\sigma_x-1/2}{(\sigma_x-\beta)^2+(t-\gamma)^2}
=O\left(\left|\sum_{n\leq x^3} \frac{C_f(n)\Lambda_x(n)
\chi(n)}{n^{\sigma_x+\i t}}\right|\right)
+O\big(\log(q(|t|+3))\big).
\ena
\end{lemma}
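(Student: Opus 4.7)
My plan is to start from the explicit formula in Lemma~\ref{lemma 1} applied at the point $s = \sigma + \i t$. This directly produces the Dirichlet polynomial $-\sum_{n\leq x^3} C_f(n)\Lambda_x(n)\chi(n)/n^s$ as the main term, reducing the proof to bounding the two remaining sums. The trivial-zero sum
\[
\frac{1}{\log^2 x}\sum_{m\geq 0}\frac{x^{-m-(k-1)/2-s}(1-x^{-m-(k-1)/2-s})^2}{(-m-(k-1)/2-s)^3}
\]
is controlled by $O(1)$ (in fact by $x^{1/4-\sigma/2}$) trivially, since its terms decay geometrically in $m$ for $\Re s \geq 1/2$ and $k\geq 2$. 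So the entire task reduces to estimating
\[
\mathcal{Z}(s) := \frac{1}{\log^2 x}\sum_\rho \frac{x^{\rho-s}(1-x^{\rho-s})^2}{(\rho-s)^3}.
\]
Setting $s_x := \sigma_x + \i t$, the strategy is to split the $\rho$-sum according to \eqref{condition 1} and to compare it against the companion expressions obtained by applying Lemma~\ref{lemma 1} at $s_x$ and the real-part identity \eqref{Re} at $s_x$.

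For the \emph{near zeros} (those satisfying \eqref{condition 1}), the definition \eqref{sigma_x} yields $|\beta-1/2|\leq(\sigma_x-1/2)/2$, from which two key consequences follow: $\beta\leq(\sigma_x+1/2)/2\leq(\sigma+1/2)/2$, giving the crucial bound $x^{\beta-\sigma}\leq x^{1/4-\sigma/2}$; and $\beta\leq\sigma_x\leq\sigma$, which gives both $|1-x^{\rho-s}|\leq 2$ and $|\rho-s|\geq|\rho-s_x|$. Using the two-regime estimate $|1-x^{\rho-s}|\leq\min(2,\,|\rho-s|\log x)$, a short calculation bounds the near-zero contribution to $\mathcal{Z}(s)$ by
\[
\ll x^{1/4-\sigma/2}\sum_{\rho\text{ near}}\frac{1}{|\rho-s_x|^2},
\]
and since $\sigma_x-\beta\asymp\sigma_x-1/2$ for near zeros, this is in turn controlled by $x^{1/4-\sigma/2}$ times the sum appearing in the second assertion. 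For the \emph{far zeros}, the lower bound $|\gamma-t|>x^{3|\beta-1/2|}/\log x$ together with $|\rho-s|\geq|\gamma-t|$ and the standard count $\#\{\rho: T\leq|\gamma-t|\leq T+1\}\ll\log(q(|t|+|T|+3))$ admits a dyadic decomposition in $T$; in the range $\beta>1/2$ the rapidly decaying factor $x^{-3(\beta-1/2)}$ absorbs the potentially large $x^{\beta-\sigma}$, yielding a net contribution of $O(x^{1/4-\sigma/2}\log(q(|t|+3)))$.

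To complete the argument, the second assertion is proved in parallel by applying \eqref{Re} at $s=s_x$, so that $\sum_\rho(\sigma_x-\beta)/|\rho-s_x|^2 = \Re(L'/L)(s_x,f\otimes\chi)+O(\log(q(|t|+3)))$, and then invoking Lemma~\ref{lemma 1} at $s_x$ (whose zero-sum error is itself controlled by the same near/far splitting, now with $\sigma$ replaced by $\sigma_x$) to replace the logarithmic derivative by the Dirichlet polynomial. The conversion from $\sigma_x-\beta$ to $\sigma_x-1/2$ is immediate for near zeros (they are comparable) and for far zeros is handled by the direct zero-counting estimate together with the smallness of $(\sigma_x-1/2)/(t-\gamma)^2$ in dyadic annuli. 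The main obstacle will be the careful interplay among the three scales $|\beta-1/2|$, $\sigma_x-1/2$ and $|\gamma-t|$ arising from \eqref{sigma_x} and \eqref{condition 1}: verifying $\sigma_x-\beta\asymp\sigma_x-1/2$ uniformly for near zeros, and showing that the dyadic tail over far zeros telescopes to $\log(q(|t|+3))$ with precisely the factor $x^{1/4-\sigma/2}$, are the bookkeeping steps where one must be most careful.
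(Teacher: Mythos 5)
Your overall scaffolding (start from Lemma~\ref{lemma 1} at $s=\sigma+\i t$, control the trivial-zero sum trivially, reduce everything to the nontrivial-zero sum $\mathcal{Z}(s)$, and close via the explicit formula at $s_x=\sigma_x+\i t$) matches the paper's strategy. However, there is a genuine gap in the treatment of the ``far'' zeros, and it is not a bookkeeping issue. You propose to bound the far-zero contribution to $\mathcal{Z}(s)$, and likewise to effect the conversion from $\sum_\rho(\sigma_x-\beta)/|\rho-s_x|^2$ to $\sum_\rho(\sigma_x-1/2)/|\rho-s_x|^2$, by a direct dyadic zero-count using the standard density $\#\{\rho:|\gamma-t|\leq T\}\ll(1+T)\log(q(|t|+T+3))$. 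This loses a factor of $\log x$. Indeed, a far zero may have $|\beta-1/2|$ arbitrarily small with $|t-\gamma|$ only slightly larger than $1/\log x$; for such a zero, $x^{\beta-\sigma}(1+x^{\beta-\sigma})^2/|\rho-s|^3\asymp x^{1/2-\sigma}(\log x)^3$, and there are up to $O(\log(q(|t|+3)))$ nontrivial zeros within distance $1$ of $t$. After the $1/\log^2 x$ prefactor, the dyadic sum over these close-in far zeros already contributes $\gg x^{1/2-\sigma}(\log x)\log(q(|t|+3))$, which is $\log x$ times too large. The same problem recurs in your ``conversion'' step: $\sum_{\rho\,{\rm far},\,|t-\gamma|\leq 1}(\sigma_x-1/2)/|\rho-s_x|^2$ can be $\gg(\sigma_x-1/2)(\log x)^2\log(q(|t|+3))\gg(\log x)\log(q(|t|+3))$ since $\sigma_x-1/2\geq10/\log x$.

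The paper avoids any direct counting of far zeros. It proves instead a single \emph{per-zero} inequality valid uniformly for all $\rho$: $x^{\beta-\sigma}(1+x^{\beta-\sigma})^2/|\rho-s|^3\leq 2(\log x)\,x^{1/4-\sigma/2}/|\rho-s_x|^2$, by splitting cases on whether $|\beta-1/2|$ is below or above $\tfrac12(\sigma_x-1/2)$ (in the latter case the defining inequality $|t-\gamma|>x^{3|\beta-1/2|}/\log x>3|\beta-1/2|$ is the engine). Summing this over all $\rho$ bounds $\mathcal{Z}(s)$ by $x^{1/4-\sigma/2}\sum_\rho|\rho-s_x|^{-2}$, and this latter sum is never estimated by counting zeros; it is compared to $\Re(L'/L)(s_x)$ via \eqref{Re}, where the functional-equation symmetry $\rho\leftrightarrow 1-\overline\rho$ together with the same case distinction yields $\sum_\rho(\sigma_x-\beta)/|\rho-s_x|^2\geq\tfrac14\sum_\rho(\sigma_x-1/2)/|\rho-s_x|^2$. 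Finally, a self-referential step is indispensable: inserting the resulting bound back into Lemma~\ref{lemma 1} at $\sigma=\sigma_x$ gives $L'/L(s_x)=-\mathrm{DP}(s_x)+\tfrac45 w' x^{1/4-\sigma_x/2}|L'/L(s_x)|+O(\cdots)$ with $|w'|\leq1$, and since $|1-\tfrac45 w' x^{1/4-\sigma_x/2}|\geq\tfrac15$, one can solve for $|L'/L(s_x)|$. Your parenthetical remark gestures at this recursion, but you would need to make the contraction constant explicit, and more importantly to replace the dyadic far-zero counting with the per-zero comparison against $|\rho-s_x|^{-2}$, which is the step that makes the $\log x$ factors cancel rather than accumulate.
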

\begin{proof}
This proof follows \cite[Lemma 4.3]{LS} closely, and we give the details here for completeness.
By \eqref{Re}, we have
\bea\label{Re sigma_x}
\Re\frac{L'}{L}(\sigma_x+\i t,f\otimes\chi)
=\sum_\rho\frac{\sigma_x-\beta}{(\sigma_x-\beta)^2+(t-\gamma)^2}
+O(\log(q(|t|+3))).
\eea
Moreover, if $\rho=\beta+\i \gamma$ is a zero of $L(s,f\otimes\chi)$, then $(1-\beta)+\i \gamma$ is also a zero of $L(s,f\otimes\chi)$.
Thus we have
\begin{equation}
\begin{split}\label{equation 1}
&\sum_\rho\frac{\sigma_x-\beta}{(\sigma_x-\beta)^2+(t-\gamma)^2}\\
&=\frac{1}{2}\left(\sum_\rho\frac{\sigma_x-\beta}
{(\sigma_x-\beta)^2+(t-\gamma)^2}
+\sum_\rho\frac{\sigma_x-(1-\beta)}
{(\sigma_x-(1-\beta))^2+(t-\gamma)^2}\right)\\
&=\left(\sigma_x-\frac{1}{2}\right)
\sum_\rho\frac{(\sigma_x-1/2)^2-(\beta-1/2)^2+(t-\gamma)^2}
{((\sigma_x-\beta)^2+(t-\gamma)^2)((\sigma_x-1+\beta)^2+(t-\gamma)^2)}.
\end{split}
\end{equation}
If $|\beta-1/2|\leq \frac{1}{2}(\sigma_x-1/2)$, then
\begin{equation*}
\begin{split}
(\sigma_x-1/2)^2-(\beta-1/2)^2
\geq&\,\frac{1}{2}\big((\sigma_x-1/2)^2+(\beta-1/2)^2\big)\\
=&\,\frac{1}{4}\big((\sigma_x-\beta)^2+(\sigma_x-1+\beta)^2\big).
\end{split}
\end{equation*}
Thus,
\bea\label{bound 1}
(\sigma_x-1/2)^2-(\beta-1/2)^2+(t-\gamma)^2
\geq\frac{1}{4}\big((\sigma_x-1+\beta)^2+(t-\gamma)^2\big).
\eea
If $|\beta-1/2|>\frac{1}{2}(\sigma_x-1/2)$, then by \eqref{sigma_x} and \eqref{condition 1} we have
\bna
|t-\gamma|>\frac{x^{3|\beta-1/2|}}{\log x}>3|\beta-1/2|.
\ena
Therefore,
\begin{equation}
\begin{split}\label{bound 2}
&(\sigma_x-1/2)^2-(\beta-1/2)^2+(t-\gamma)^2\nonumber\\
&=\big((\sigma_x-1/2)^2+(\beta-1/2)^2\big)
+(t-\gamma)^2-2(\beta-1/2)^2\nonumber\\
&\geq\frac{1}{2}\big((\sigma_x-\beta)^2
+(\sigma_x-1+\beta)^2\big)+\frac{7}{9}(t-\gamma)^2\nonumber\\
&\geq\frac{1}{4}\big((\sigma_x-1+\beta)^2+(t-\gamma)^2\big).
\end{split}
\end{equation}
Combining \eqref{bound 1}, \eqref{bound 2} and \eqref{equation 1}, we have
\bna
\sum_\rho\frac{\sigma_x-\beta}{(\sigma_x-\beta)^2+(t-\gamma)^2}
\geq\frac{1}{4}(\sigma_x-1/2)\sum_\rho\frac{1}
{(\sigma_x-\beta)^2+(t-\gamma)^2}.
\ena
Using this bound and \eqref{Re sigma_x}, we obtain
\bea\label{step 1}
\sum_\rho\frac{\sigma_x-1/2}{(\sigma_x-\beta)^2+(t-\gamma)^2}
\leq 4\left|\frac{L'}{L}(\sigma_x+\i t,f\otimes\chi)\right|
+O\left(\log(q(|t|+3))\right).
\eea
On the other hand, by Lemma \ref{lemma 1}, we have
\begin{equation}
\begin{split}\label{step 2}
\frac{L'}{L}(\sigma+\i t,f\otimes\chi)
=&-\sum_{n\leq x^3} \frac{C_f(n)\Lambda_x(n)\chi(n)}{n^{\sigma+\i t}}
+\frac{w(x,\sigma,t)}{\log^2x}
\sum_\rho\frac{x^{\beta-\sigma}(1+x^{\beta-\sigma})^2}
{\big((\sigma-\beta)^2+(t-\gamma)^2\big)^{3/2}}\\
&+O\left(\frac{x^{-\sigma}}{\log^2x}\right)
\end{split}\end{equation}
with $|w(x,\sigma,t)|\leq1$.
Next we claim that
\bea\label{claim}
\frac{x^{\beta-\sigma}(1+x^{\beta-\sigma})^2}
{\big((\sigma-\beta)^2+(t-\gamma)^2\big)^{3/2}}
\leq 2\log x\frac{x^{1/4-\sigma/2}}{(\sigma_x-\beta)^2+(t-\gamma)^2}.
\eea
In fact, if $\beta\leq \frac{1}{2}(\sigma_x-1/2)$, then we have
\begin{equation*}
\begin{split}
\frac{x^{\beta-\sigma}(1+x^{\beta-\sigma})^2}
{\big((\sigma-\beta)^2+(t-\gamma)^2\big)^{3/2}}
&\leq \frac{4x^{1/4-\sigma/2}}
{(\sigma_x-\beta)\big((\sigma_x-\beta)^2+(t-\gamma)^2\big)}\\
&\leq \frac{8}{\sigma_x-1/2}
\frac{x^{1/4-\sigma/2}}{(\sigma_x-\beta)^2+(t-\gamma)^2}\\
&\leq \frac{4}{5}\log x\frac{x^{1/4-\sigma/2}}
{(\sigma_x-\beta)^2+(t-\gamma)^2}.
\end{split}\end{equation*}
If $\beta>\frac{1}{2}(\sigma_x-1/2)$, then we have
\bna
|t-\gamma|>\frac{x^{3|\beta-1/2|}}{\log x}>3|\beta-1/2|\geq 3|\beta-\sigma_x|.
\ena
Thus, $(t-\gamma)^2>\frac{8}{9}\big((\beta-\sigma_x)^2+(t-\gamma)^2\big)$.
Hence
\begin{equation*}
\begin{split}
\frac{x^{\beta-\sigma}(1+x^{\beta-\sigma})^2}{\big((\sigma-\beta)^2+(t-\gamma)^2\big)^{3/2}}
&\leq\frac{x^{\beta-\sigma}(1+x^{\beta-1/2})^2}{|t-\gamma|(t-\gamma)^2}\\
&\leq\frac{\log x}{x^{3|\beta-1/2|}}\frac{9}{8}\frac{x^{\beta-\sigma}(1+x^{\beta-1/2})^2}{(\beta-\sigma_x)^2+(t-\gamma)^2}\\
&=\frac{9}{8}(\log x)(1+x^{-(\beta-1/2)})^2\frac{x^{1/2-\sigma}}{(\sigma_x-\beta)^2+(t-\gamma)^2}\\
&\leq \frac{9}{8}(\log x)(1+e^{-5})^2\frac{x^{1/2-\sigma}}{(\sigma_x-\beta)^2+(t-\gamma)^2}\\
&<2\log x\frac{x^{{1/4-\sigma/2}}}{(\sigma_x-\beta)^2+(t-\gamma)^2}.
\end{split}\end{equation*}
Using \eqref{claim} and \eqref{step 1}, we have
\begin{equation*}
\begin{split}
&\sum_\rho\frac{x^{\beta-\sigma}(1+x^{\beta-\sigma})^2}{\big((\sigma-\beta)^2+(t-\gamma)^2\big)^{3/2}}\\
&\leq\,8\log x\frac{x^{1/4-\sigma/2}}{\sigma_x-1/2}
\left|\frac{L'}{L}(\sigma_x+\i t,f\otimes\chi)\right|
+O\left(\frac{(\log x)x^{1/4-\sigma/2}\log(q(|t|+3))}{\sigma_x-1/2}\right)\\
&\leq\,\frac{4}{5}(\log x)^2x^{1/4-\sigma/2}
\left|\frac{L'}{L}(\sigma_x+\i t,f\otimes\chi)\right|
+O\left((\log x)^2x^{1/4-\sigma/2}\log(q(|t|+3))\right).
\end{split}
\end{equation*}
Inserting this bound into \eqref{step 2}, we have
\begin{equation}
\begin{split}\label{step 3}
\frac{L'}{L}(\sigma+\i t,f\otimes\chi)
=&-\sum_{n\leq x^3} \frac{C_f(n)\Lambda_x(n)\chi(n)}{n^{\sigma+\i t}}+\frac{4}{5}w'(x,\sigma,t)x^{1/4-\sigma/2}
\left|\frac{L'}{L}(\sigma_x+\i t,f\otimes\chi)\right|\\
&+O\left(x^{1/4-\sigma/2}\log(q(|t|+3))\right),
\end{split}
\end{equation}
with $|w'(x,\sigma,t)|\leq 1$.
By taking $\sigma=\sigma_x$,
\begin{equation*}
\begin{split}
&\left(1-\frac{4}{5}w'(x,\sigma_x,t)x^{1/4-\sigma_x/2}\right)
\frac{L'}{L}(\sigma_x+\i t,f\otimes\chi)\\
&=O\left(\left|\sum_{n\leq x^3}
\frac{C_f(n)\Lambda_x(n)\chi(n)}{n^{\sigma_x+\i t}}\right|\right)
+O\left(x^{1/4-\sigma_x/2}\log(q(|t|+3))\right).
\end{split}\end{equation*}
Since $\left|1-\frac{4}{5}w'(x,\sigma_x,t)x^{1/4-\sigma_x/2}\right|
\geq 1-\frac{4}{5}\geq \frac{1}{5}$, we have
\bea\label{step 4}
\frac{L'}{L}(\sigma_x+\i t,f\otimes\chi)
=O\left(\left|\sum_{n\leq x^3}
\frac{C_f(n)\Lambda_x(n)\chi(n)}{n^{\sigma_x+\i t}}\right|\right)
+O\left(x^{1/4-\sigma_x/2}\log(q(|t|+3))\right).
\eea
Inserting \eqref{step 4} into \eqref{step 3} and \eqref{step 1},
respectively, we complete the proof of the lemma.
\end{proof}

The following proposition provides an approximation of $S(t,f\otimes\chi)$.
\begin{proposition}\label{theorem 1}
For $t\neq 0$, and $x\geq 4$, we have
\begin{equation*}
\begin{split}
S(t,f\otimes\chi)
=&\frac{1}{\pi}\Im\sum_{n\leq x^3}\frac{C_f(n)\Lambda_x(n)\chi(n)}{n^{\sigma_x+\i t}\log n}+O\left((\sigma_x-1/2)\left|\sum_{n\leq x^3} \frac{C_f(n)\Lambda_x(n)\chi(n)}{n^{\sigma_x+\i t}}\right|\right)\\
&+O\big((\sigma_x-1/2)\log(q(|t|+3))\big),
\end{split}\end{equation*}
where $\sigma_x$ is defined as in \eqref{sigma_x}.
\end{proposition}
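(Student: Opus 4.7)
The approach begins from the contour-integral identity
\[
\pi S(t,f\otimes\chi) = -\Im\int_{1/2}^{\infty}\frac{L'}{L}(\sigma+\i t,f\otimes\chi)\,\mathrm{d}\sigma,
\]
obtained by integrating $-L'/L$ along the horizontal line from $+\infty+\i t$ to $1/2+\i t$ starting with the branch $\arg L=0$ at infinity. I split the integral at $\sigma=\sigma_x$, applying Lemma~\ref{lemma 3} to the upper range and Lemma~\ref{lemma 2} combined with Lemma~\ref{lemma 3} to the lower range.

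\textbf{Upper range $[\sigma_x,\infty)$.}
I insert the Dirichlet-polynomial approximation of Lemma~\ref{lemma 3} and integrate termwise. The identity $\int_{\sigma_x}^{\infty}n^{-\sigma}\,\mathrm{d}\sigma = n^{-\sigma_x}/\log n$ converts the leading polynomial into the target main term $\frac{1}{\pi}\Im\sum_{n\leq x^3}C_f(n)\Lambda_x(n)\chi(n)/(n^{\sigma_x+\i t}\log n)$. The two error terms of Lemma~\ref{lemma 3} pick up the factor $\int_{\sigma_x}^{\infty}x^{1/4-\sigma/2}\,\mathrm{d}\sigma = 2x^{1/4-\sigma_x/2}/\log x$; the defining inequality $\sigma_x-1/2\geq 10/\log x$ forces $x^{1/4-\sigma_x/2}\leq e^{-5}$ and $1/\log x\leq(\sigma_x-1/2)/10$, so this contribution is absorbed into $O\bigl((\sigma_x-1/2)(|\sum_n|+\log q(|t|+3))\bigr)$.

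\textbf{Lower range $[1/2,\sigma_x]$.}
I write $\frac{L'}{L}(\sigma+\i t)=\frac{L'}{L}(\sigma_x+\i t)+\bigl[\frac{L'}{L}(\sigma+\i t)-\frac{L'}{L}(\sigma_x+\i t)\bigr]$. The constant piece contributes $(\sigma_x-1/2)\Im\frac{L'}{L}(\sigma_x+\i t)$, bounded cleanly by the target error via Lemma~\ref{lemma 3} at $\sigma=\sigma_x$ (where $x^{1/4-\sigma_x/2}=O(1)$). For the difference piece, Lemma~\ref{lemma 2} gives
\[
\Im\Bigl(\frac{L'}{L}(\sigma+\i t)-\frac{L'}{L}(\sigma_x+\i t)\Bigr)=\sum_\rho\Im\Bigl(\frac{1}{\sigma+\i t-\rho}-\frac{1}{\sigma_x+\i t-\rho}\Bigr)+O(1).
\]
The $O(1)$ integrates to $O(\sigma_x-1/2)$, and swapping the sum with the $\sigma$-integral reduces the task to bounding $\sum_\rho I(\rho)$, where, after the identity
\[
\Im\Bigl(\frac{1}{\sigma+\i t-\rho}-\frac{1}{\sigma_x+\i t-\rho}\Bigr) = -(t-\gamma)\,\frac{(\sigma_x-\sigma)(\sigma_x+\sigma-2\beta)}{A(\sigma)\,B},
\]
with $A(\sigma)=(\sigma-\beta)^2+(t-\gamma)^2$ and $B=(\sigma_x-\beta)^2+(t-\gamma)^2$, $I(\rho)$ denotes the inner integral over $\sigma\in[1/2,\sigma_x]$.

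\textbf{Main obstacle.}
The crux is bounding $\sum_\rho|I(\rho)|$ by $O\bigl((\sigma_x-1/2)(|\sum_n|+\log q(|t|+3))\bigr)$ by exploiting the structural information encoded in the definition~\eqref{sigma_x} of $\sigma_x$. For the near-critical zeros with $|\beta-1/2|\leq(\sigma_x-1/2)/2$, a direct calculation shows $|\sigma_x+\sigma-2\beta|\leq 3(\sigma_x-1/2)$ uniformly for $\sigma\in[1/2,\sigma_x]$; combined with $\int_{1/2}^{\sigma_x}\mathrm{d}\sigma/A(\sigma)\leq\pi/|t-\gamma|$, this yields the sharpened pointwise bound $|I(\rho)|\ll(\sigma_x-1/2)^2/B$, whose summation via the second assertion of Lemma~\ref{lemma 3} delivers $(\sigma_x-1/2)\cdot\sum_\rho(\sigma_x-1/2)/B = O\bigl((\sigma_x-1/2)(|\sum_n|+\log q(|t|+3))\bigr)$. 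For the outlier zeros $|\beta-1/2|>(\sigma_x-1/2)/2$, the contrapositive of the definition of $\sigma_x$ forces the exponential separation $|t-\gamma|>x^{3|\beta-1/2|}/\log x$, so that $A(\sigma),B\geq(t-\gamma)^2$; a dyadic decomposition in $|\beta-1/2|$ combined with the classical zero-counting bound $N(T;f\otimes\chi)\ll T\log(q(|t|+T))$ controls this sub-sum within the stated error. Identifying the extra $(\sigma_x-1/2)$ factor from the Taylor cancellation in $(\sigma_x+\sigma-2\beta)$ is the technical heart of the argument; after all contributions are assembled and divided by $\pi$, the proposition follows.
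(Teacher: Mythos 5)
Your decomposition into upper range $[\sigma_x,\infty)$, the constant piece, and the difference over $[1/2,\sigma_x]$ exactly matches the paper's $J_1+J_2+J_3$, and your treatment of $J_1$, $J_2$, and the near-critical zeros in $J_3$ is the same argument the paper gives. The gap is in your handling of the outlier zeros $|\beta-1/2|>(\sigma_x-1/2)/2$.

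For those zeros you discard the factor $B=(\sigma_x-\beta)^2+(t-\gamma)^2$, replace it by $(t-\gamma)^2$, and try to close the estimate with a dyadic decomposition in $|\beta-1/2|$ plus the crude bound $N(T;f\otimes\chi)\ll T\log(q(|t|+T))$. This does not deliver the needed $(\sigma_x-1/2)$ factor. Writing $y=(\sigma_x-1/2)\log x\geq10$, the exponential gap gives $|t-\gamma|>e^{3y/2}/\log x$, and your zero-counting bound yields a contribution of size roughly $y^2\log x\cdot e^{-9y/2}\log(q(|t|+3))$, which exceeds the target $(\sigma_x-1/2)\log(q(|t|+3))=(y/\log x)\log(q(|t|+3))$ once $(\log x)^2$ overwhelms $e^{9y/2}/y$, i.e.\ for $y$ near its minimum $10$ and $x$ sufficiently large. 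Since the proposition must hold for all $x\geq4$, this is a genuine hole, not a matter of sharpening constants.

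The paper avoids any zero-counting entirely. It keeps $B$ intact, uses $\sigma_x-\sigma\leq\sigma_x-1/2$, and bounds only the inner $\sigma$-integral pointwise for each $\rho$: in the near-critical case $\int_{1/2}^{\sigma_x}\frac{|\sigma+\sigma_x-2\beta||t-\gamma|}{A(\sigma)}\,\mathrm{d}\sigma\leq3\pi(\sigma_x-1/2)\leq10(\sigma_x-1/2)$, while in the outlier case the separation gives $A(\sigma)\geq(t-\gamma)^2$ and $|\sigma+\sigma_x-2\beta|\leq6|\beta-1/2|<2|t-\gamma|$, so the same integral is $\leq2(\sigma_x-1/2)$. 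Either way the integral is $\leq10(\sigma_x-1/2)$ uniformly, hence $|J_3|\leq10(\sigma_x-1/2)\sum_\rho\frac{\sigma_x-1/2}{B}+O(\sigma_x-1/2)$, and one single application of the second assertion of Lemma~\ref{lemma 3} to the full sum (not just the near-critical subsum) finishes the proof. The case split on $|\beta-1/2|$ serves only to bound the inner integral; it is never used to split the outer sum over $\rho$. Adopt that viewpoint and your argument goes through.
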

\begin{proof}
By the definition of $S(t,f\otimes\chi)$, we have
\begin{equation*}
\begin{split}
\pi S(t,f\otimes\chi)
=&-\int_{1/2}^\infty\Im \frac{L'}{L}(\sigma+\i t,f\otimes\chi)\mathrm{d}\sigma\\
=&-\int_{\sigma_x}^\infty\Im \frac{L'}{L}(\sigma+\i t,f\otimes\chi)\mathrm{d}\sigma
-(\sigma_x-1/2)\Im \frac{L'}{L}(\sigma_x+\i t,f\otimes\chi)\\
&+\int_{1/2}^{\sigma_x}\Im\left( \frac{L'}{L}(\sigma_x+\i t,f\otimes\chi)
-\frac{L'}{L}(\sigma+\i t,f\otimes\chi)\right)\mathrm{d}\sigma\\
=:&\, J_1+J_2+J_3.
\end{split}
\end{equation*}
By Lemma \ref{lemma 3}, we have
\begin{equation*}
\begin{split}
J_1
=&-\int_{\sigma_x}^\infty\Im\frac{L'}{L}(\sigma+\i t,f\otimes\chi)
\mathrm{d}\sigma\\
=&\int_{\sigma_x}^\infty\Im\sum_{n\leq x^3} \frac{C_f(n)\Lambda_x(n)\chi(n)}{n^{\sigma+\i t}}\mathrm{d}\sigma
+O\left(\left|\sum_{n\leq x^3} \frac{C_f(n)\Lambda_x(n)\chi(n)}{n^{\sigma_x+\i t}}\right|
\int_{\sigma_x}^\infty x^{1/4-\sigma/2}\mathrm{d}\sigma\right)\\
&+O\left(\log(q(|t|+3))\int_{\sigma_x}^\infty x^{1/4-\sigma/2}\mathrm{d}\sigma\right)\\
=&\Im\sum_{n\leq x^3} \frac{C_f(n)\Lambda_x(n)\chi(n)}{n^{\sigma_x+\i t}\log n}
+O\left(\frac{1}{\log x}\left|\sum_{n\leq x^3} \frac{C_f(n)\Lambda_x(n)\chi(n)}{n^{\sigma_x+\i t}}\right|\right)\\
&+O\left(\frac{\log(q(|t|+3))}{\log x}\right).
\end{split}\end{equation*}
Taking $\sigma=\sigma_x$ in Lemma \ref{lemma 3}, we have
\begin{equation*}
\begin{split}
J_2&\leq(\sigma_x-1/2)
\left|\frac{L'}{L}(\sigma_x+\i t,f\otimes\chi)\right|\\
&\ll(\sigma_x-1/2)\left|\sum_{n\leq x^3} \frac{C_f(n)\Lambda_x(n)\chi(n)}{n^{\sigma_x+\i t}}\right|
+(\sigma_x-1/2)\log(q(|t|+3)).
\end{split}\end{equation*}
We apply Lemma \ref{lemma 2} to bound $J_3$ and get
\begin{equation*}
\begin{split}
&\Im\left( \frac{L'}{L}(\sigma_x+\i t,f\otimes\chi)
-\frac{L'}{L}(\sigma+\i t,f\otimes\chi)\right)\\
&=\sum_{\rho}\Im\left(\frac{1}{(\sigma_x-\beta)+\i (t-\gamma)}
-\frac{1}{(\sigma-\beta)+\i (t-\gamma)}\right)+O(1)\\
&=\sum_\rho \frac{(\sigma_x-\sigma)(\sigma+\sigma_x-2\beta)(t-\gamma)}
{\big((\sigma_x-\beta)^2+(t-\gamma)^2\big)
\big((\sigma-\beta)^2+(t-\gamma)^2\big)}+O(1).
\end{split}\end{equation*}
Hence
\begin{equation*}
\begin{split}
|J_3|
&\leq\sum_\rho\int_{1/2}^{\sigma_x}
\frac{(\sigma_x-\sigma)|\sigma+\sigma_x-2\beta||t-\gamma|}
{\big((\sigma_x-\beta)^2+(t-\gamma)^2\big)
\big((\sigma-\beta)^2+(t-\gamma)^2\big)}\mathrm{d}\sigma
+O(\sigma_x-1/2)\\
&\leq\sum_\rho\frac{\sigma_x-1/2}
{(\sigma_x-\beta)^2+(t-\gamma)^2}\int_{1/2}^{\sigma_x}
\frac{|\sigma+\sigma_x-2\beta||t-\gamma|}
{(\sigma-\beta)^2+(t-\gamma)^2}\mathrm{d}\sigma
+O(\sigma_x-1/2).
\end{split}\end{equation*}
If $|\beta-1/2|\leq\frac{1}{2}(\sigma_x-1/2)$, then for $1/2\leq\sigma\leq\sigma_x$,
\begin{equation*}
\begin{split}
|\sigma+\sigma_x-2\beta|
&=|(\sigma-1/2)+(\sigma_x-1/2)-2(\beta-1/2)|\\
&\leq |\sigma-1/2|+|\sigma_x-1/2|+2|\beta-1/2|\leq 3|\sigma_x-1/2|.
\end{split}\end{equation*}
Thus,
\begin{equation*}
\begin{split}
\int_{1/2}^{\sigma_x}\frac{|\sigma+\sigma_x-2\beta||t-\gamma|}
{(\sigma-\beta)^2+(t-\gamma)^2}\mathrm{d}\sigma
&\leq 3(\sigma_x-1/2)\int_{-\infty}^\infty\frac{|t-\gamma|}
{(\sigma-\beta)^2+(t-\gamma)^2}\mathrm{d}\sigma\\
&\leq 10(\sigma_x-1/2).
\end{split}\end{equation*}
If $|\beta-1/2|>\frac{1}{2}(\sigma_x-1/2)$, then
\bna
|t-\gamma|>\frac{x^{3|\beta-1/2|}}{\log x}>3|\beta-1/2|
\ena
and for $1/2\leq\sigma\leq\sigma_x$,
\bna
|\sigma+\sigma_x-2\beta|\leq |\sigma-1/2|+|\sigma_x-1/2|+2|\beta-1/2|\leq 6|\beta-1/2|.
\ena
Thus,
\begin{equation*}
\begin{split}
\int_{1/2}^{\sigma_x}\frac{|\sigma+\sigma_x-2\beta||t-\gamma|}
{(\sigma-\beta)^2+(t-\gamma)^2}\mathrm{d}\sigma
&<\int_{1/2}^{\sigma_x}\frac{|\sigma+\sigma_x-2\beta|}
{|t-\gamma|}\mathrm{d}\sigma\\
&\leq \int_{1/2}^{\sigma_x}\frac{6|\beta-1/2|}
{3|\beta-1/2|}\mathrm{d}\sigma=2(\sigma_x-1/2).
\end{split}\end{equation*}
By Lemma \ref{lemma 3}, we obtain
\begin{equation*}
\begin{split}
|J_3|&\leq10(\sigma_x-1/2)
\sum_\rho\frac{\sigma_x-1/2}{(\sigma_x-\beta)^2+(t-\gamma)^2}
+O(\sigma_x-1/2)\\
&=O\left((\sigma_x-1/2)\left|\sum_{n\leq x^3} \frac{C_f(n)\Lambda_x(n)\chi(n)}{n^{\sigma_x+\i t}}\right|\right)
+O\big((\sigma_x-1/2)\log(q(|t|+3))\big).
\end{split}\end{equation*}
The proposition follows from these bounds for $J_1$, $J_2$ and $J_3$.
\end{proof}

\begin{remark}
As a consequence of this proposition, under GRH, we can obtain an
analogue result of Selberg \cite{Selberg2}.
In fact, we have $\sigma_x=\frac{1}{2}+\frac{10}{\log x}$ under GRH.
Thus by \eqref{eq:KS} we have
\bna
|C_f(p^m)\Lambda_x(p^m)\chi(p^m)|\ll \log p.
\ena
Thus
\bna
\left|\Im\sum_{n\leq x^3}\frac{C_f(n)\Lambda_x(n)\chi(n)}{n^{\sigma_x+\i t}\log n}\right|
\ll \sum_{p\leq x^3}p^{-1/2}
\ll \frac{x^{3/2}}{\log x},
\ena
and
\bna
(\sigma_x-1/2)\left|\sum_{n\leq x^3}
\frac{C_f(n)\Lambda_x(n)\chi(n)}{n^{\sigma_x+\i t}}\right|
\ll \frac{1}{\log x}\sum_{p\leq x^3}p^{-1/2}\log p\ll \frac{x^{3/2}}{\log x}.
\ena
By taking $x=\big(\log(q(|t|+3))\big)^{2/3}$ in Proposition
\ref{theorem 1}, we conclude that
\bna
S(t,f\otimes\chi)
\ll\frac{\log(q(|t|+3))}
{\log\log(q(|t|+3))}.
\ena
\end{remark}

\section{Proof of proposition \ref{main propsition}}\label{Proof of the main theorem}
In this section, we give the proof of Proposition \ref{main propsition}.
\subsection{Proof of the asymptotic formula \eqref{M(t,f) moment}}
Firstly, we write
\begin{equation}\label{eq:M}
\begin{split}
\sideset{}{^*}\sum_{\chi \bmod q}M(t,f\otimes\chi)^{2n}
=&\;\;\sum_{\chi \bmod q}M(t,f\otimes\chi)^{2n}
-\left(\frac{1}{\pi}\Im\sum_{p\leq x^3}\frac{C_f(p)\chi_0(p)}{p^{1/2+\i t}}
\right)^{2n},
\end{split}\end{equation}
where the summation on the right-hand side runs over all characters $\chi \bmod q$ and $\chi_0$ denotes the principal character modulus $q$.
Recall that $C_f(p)=\lambda_f(p)$ and we further have
\bna
M(t,f\otimes\chi)=\frac{1}{\pi}\Im\sum_{p\leq x^3}\frac{C_f(p)\chi(p)}{p^{1/2+\i t}}
=\frac{-\i}{2\pi}\left(\mathcal{S}-\overline{\mathcal{S}}\right),
\ena
where
\bna
\mathcal{S}=\sum_{p\leq x^3}\frac{\lambda_f(p)\chi(p)}{p^{1/2+\i t}}.
\ena
Set $x=q^{\delta/3}$ for a suitably small $\delta>0$ to be chosen later. Thus we have
\bea\label{eq:M2}
\sum_{\chi \bmod q}M(t,f\otimes\chi)^{2n}
=\frac{(-\i)^{2n}}{(2\pi)^{2n}}
\sum_{m=0}^{2n}(-1)^{2n-m}\binom{2n}{m}
\sum_{\chi \bmod q}\mathcal{S}^{m}\overline{\mathcal{S}}^{2n-m}.
\eea
Note that
\begin{equation}\label{general term}
\begin{split}
\sum_{\chi \bmod q}\mathcal{S}^{m}\overline{\mathcal{S}}^{2n-m}
=&\sum_{p_1,\cdots,p_{2n}\leq x^3}\frac{\lambda_f(p_1)
\cdots\lambda_f(p_{2n})}{\sqrt{p_{1}p_{2}\cdots p_{2n}}}
\left(\frac{p_{m+1}\cdots p_{2n}}{p_{1}p_{2}\cdots p_{m}}\right)^{\i t}\\
&\qquad\qquad\qquad\times\sum_{\chi \bmod q}\chi\left(p_{1}p_{2}\cdots p_{m}\right)
\overline{\chi}\left(p_{m+1}\cdots p_{2n}\right).
\end{split}\end{equation}
By the orthogonality of Dirichlet characters, we have
\begin{align*}
\sum_{\chi \bmod q} \chi(m)\overline{\chi}(n) = \begin{cases}
\varphi(q), & \text{if } m \equiv n \bmod q, \\
0, & \text{otherwise}.
\end{cases}
\end{align*}
Hence, the sum in the right-side hand of \eqref{general term} vanishes unless
$p_1 \cdots p_m \equiv p_{m+1} \cdots p_{2n} \pmod{q}$.
Assume $0<\delta<1/n$.
Note that $p_i\leq x^3=q^{\delta}$ for $i=1,2,\cdots,2n$.
It follows that only the diagonal term $m=n$ survives.
Thus we have
\begin{equation*}
\begin{split}
\sum_{\chi \bmod q}|\mathcal{S}|^{2n}
=&\,\varphi(q)\sum_{p_1,\cdots,p_{2n}\leq x^3\atop p_{1}p_{2}\cdots p_{n}=p_{n+1}\cdots p_{2n}}
\frac{\lambda_f(p_1)\cdots\lambda_f(p_n)\lambda_f(p_{n+1})\cdots\lambda_f(p_{2n})}{p_{1}p_{2}\cdots p_{n}}\\
=&\,n!\varphi(q)\sum_{p_1,\cdots,p_{n}\leq x^3 \atop p_i \,\text{distinct}}
\prod_{i=1}^{n}\frac{|\lambda_f(p_i)|^2}{p_i}
+O\left(\varphi(q)\sum_{p_1,\cdots,p_{n-1}\leq x^3}\frac{\lambda_f^2(p_1)
\cdots\lambda_f^2(p_{n-2})\lambda_f^4(p_{n-1})}{p_{1}\cdots p_{n-2}p_{n-1}^2}\right)\\
=&\,n!\varphi(q)\left(\sum_{p\leq x^3}\frac{|\lambda_f(p)|^2}{p}\right)^{n}
+O\left(\varphi(q)\left(\sum_{p\leq x^3}\frac{|\lambda_f(p)|^2}{p}\right)^{n-2}\right).
\end{split}\end{equation*}
By Lemma \ref{bound} (3), we have
\bna
\sum_{\chi \bmod q}|\mathcal{S}|^{2n}
=n!\varphi(q)\left(\log\log q \right)^{n}
+O\left(\varphi(q)\left(\log\log q\right)^{n-1}\right).
\ena
Inserting this into \eqref{eq:M2}, we get
\bea\label{eq:M3}
\sum_{\chi \bmod q}|M(t,f\otimes\chi)|^{2n}
=\frac{(2n)!}{n!(2\pi)^{2n}}\varphi(q)\left(\log\log q\right)^{n}
+O\left(\varphi(q)\left(\log\log q\right)^{n-1}\right).
\eea
It remains to bound the contribution from the principal character in \eqref{eq:M}.
Notice that
\bna
\Bigg(\frac{1}{\pi}\Im\sum_{p\leq x^3}\frac{C_f(p)\chi_0(p)}{p^{1/2+\i t}}
\Bigg)^{2n}
\leq\left(\frac{1}{\pi}\sum_{m\leq x^3}\frac{1}{\sqrt{m}}\right)^{2n}
=O(x^{3n})
=O(q),
\ena
since $0<\delta<1/n$.
Combining this with \eqref{eq:M} and \eqref{eq:M3},
we complete the proof of the asymptotic formula in \eqref{M(t,f) moment}.
\subsection{Proof of \eqref{R(t,f) moment}}
To prove \eqref{R(t,f) moment},
we first present two crucial preliminary lemmas.
\begin{lemma}\label{lemma 6}
Let $t>0$ be given. For prime number $q$ sufficiently large and $x=q^{\delta/3}$ with
$0<\delta<\frac{3\eta}{8n+3A+3}$, we have
\bna
\frac{1}{\varphi^*(q)}\;\;\sideset{}{^*}\sum_{\chi \bmod q}
(\sigma_{x}-1/2)^{4n}x^{4n(\sigma_{x}-1/2)}
\ll_{t,n,\delta,A}\frac{1}{(\log q)^{4n}},
\ena
where $\sigma_{x}$ is defined by \eqref{sigma_x}, $\eta$ and $A$ are as in Theorem \ref{zero-density}.
\end{lemma}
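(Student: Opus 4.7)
The plan is to bound $(\sigma_x - 1/2)^{4n}x^{4n(\sigma_x-1/2)}$ pointwise by a base term plus a sum over zeros, and then to apply the zero-density estimate of Theorem~\ref{zero-density} after a carefully chosen dyadic decomposition in $|\beta - 1/2|$. By the definition~\eqref{sigma_x}, one has $\sigma_x - 1/2 \geq 10/\log x$, with equality unless some zero $\rho = \beta + \mathrm{i}\gamma$ of $L(s, f\otimes\chi)$ satisfies both $|\beta - 1/2| > 5/\log x$ and $|t-\gamma| \leq x^{3|\beta-1/2|}/\log x$. Setting $g(u) := (2u)^{4n} x^{8nu}$, which is monotone increasing on $[0,\infty)$, I will use the pointwise estimate
\begin{equation*}
(\sigma_x - 1/2)^{4n} x^{4n(\sigma_x - 1/2)} \;\leq\; g(5/\log x) \;+\; \sum_{\rho} g(|\beta-1/2|)\,,
\end{equation*}
where the sum is over zeros realising the inner max in \eqref{sigma_x}. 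Since $x = q^{\delta/3}$, the base term equals $(10/\log x)^{4n} e^{40n} \ll_{n,\delta} (\log q)^{-4n}$, and the task reduces to bounding the $\chi$-average of the sum on the right.

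Using the functional equation $\Lambda(s, f\otimes\chi) = \varepsilon\,\Lambda(1-s, f\otimes\overline{\chi})$, every zero of $L(s, f\otimes\chi)$ with $\beta < 1/2$ corresponds, with the same $\gamma$, to a zero of $L(s, f\otimes\overline{\chi})$ with $\beta > 1/2$, so the two contributions coincide after the average over primitive $\chi$. Hence I restrict to $\beta > 1/2$ and dyadically decompose $|\beta-1/2| \in (5/\log x, 1/2]$ into intervals $I_j = (U_j, U_{j+1}]$ with $U_j := U_0 r^j$, $U_0 := 5/\log x$, and common ratio $r = 1 + \epsilon$ for a small $\epsilon > 0$ to be fixed. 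On $I_j$ monotonicity gives $g(|\beta-1/2|) \leq g(U_{j+1})$, while the displacement condition tightens to $|t-\gamma| \leq x^{3U_{j+1}}/\log x$. An application of Theorem~\ref{zero-density} — the length condition $t_2 - t_1 \geq 1/\log q$ is automatic since $x^{3U_0} \geq e^{15}$ — bounds the $\chi$-averaged count of relevant zeros in $I_j$ by
\begin{equation*}
\ll\, (1+|t|+x^{3U_{j+1}})^A \cdot \frac{x^{3U_{j+1}}}{\log x}\cdot q^{-\eta U_j}\log q\,.
\end{equation*}

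Multiplying by $g(U_{j+1})$ and substituting $x = q^{\delta/3}$, the exponent of $q$ in the $j$-th dyadic term becomes $U_j\bigl[\, r\cdot (8n+3A+3)\delta/3 - \eta \,\bigr]$. Choosing $\epsilon > 0$ small — which is legitimate precisely because $\delta < 3\eta/(8n+3A+3)$ — this equals $-c U_j$ for some $c = c(\eta,\delta,A,n) > 0$. The resulting series $\sum_j (2U_{j+1})^{4n} q^{-c U_j}$ is dominated by its $j=0$ term, contributing $(10/\log x)^{4n} \cdot q^{-c U_0} \cdot \log q/\log x = O_{t,n,\delta,A}((\log q)^{-4n})$, since $q^{-cU_0} = e^{-15c/\delta}$ and $\log q/\log x = 3/\delta$ are constants in $q$. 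Combined with the base term, this completes the plan.

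The main obstacle is making the dyadic decomposition fine enough: a naive choice $r = 2$ would produce the exponent $U_j[\,2(8n+3A+3)\delta/3 - \eta\,]$ and force the stricter constraint $\delta < 3\eta/(2(8n+3A+3))$, missing the hypothesis. Taking $r = 1 + \epsilon$ with $\epsilon \to 0^+$ restores the sharp range $\delta < 3\eta/(8n+3A+3)$; although the number of dyadic intervals in $(5/\log x, 1/2]$ then grows like $O_\epsilon(\log\log q)$, this is absorbed into the implied constant thanks to the geometric decay in $U_j$.
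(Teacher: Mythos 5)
Your proof is correct and reaches the sharp range $\delta<3\eta/(8n+3A+3)$, but takes a slightly different route from the paper. The paper decomposes $|\beta-1/2|$ into \emph{arithmetic} intervals $(j/\log x,\,(j+1)/\log x]$ for $j=5,6,\ldots,\tfrac12\lfloor\log x\rfloor$, bounds $g(\beta-1/2)$ on each by $\bigl((j+1)/\log x\bigr)^{4n}e^{8n(j+1)}$ and applies Theorem~\ref{zero-density} at $\sigma=1/2+j/\log x$, producing the exponential $\exp\bigl((8n+3A+3-3\eta/\delta)j\bigr)$ whose convergence is exactly equivalent to $\delta<3\eta/(8n+3A+3)$. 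You instead use a \emph{geometric} decomposition $U_j=U_0(1+\epsilon)^j$. The two are essentially interchangeable: the paper's additive intervals have ratio $(j+1)/j\to1$ automatically, which is why the paper does not need to tune any parameter, whereas your multiplicative decomposition requires $\epsilon\to0^+$ to recover the same range. Your diagnosis that a fixed ratio $r=2$ would be too lossy is accurate, but the remedy is not strictly necessary — the additive choice achieves it for free, and is arguably the more natural one when the thresholds $5/\log x$ and $\sigma-1/2\geq1/\log q$ already live on a $1/\log$ scale.

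One small imprecision worth flagging: the functional equation alone sends a zero $\rho=\beta+\i\gamma$ of $L(s,f\otimes\chi)$ to the zero $1-\rho=(1-\beta)-\i\gamma$ of $L(s,f\otimes\overline{\chi})$ — the imaginary part flips sign, not stays the same. The symmetry that preserves $\gamma$ is $\rho\mapsto1-\overline{\rho}$, which is a symmetry of the \emph{same} $L$-function $L(s,f\otimes\chi)$ (it combines the functional equation with the conjugation relation $\overline{\Lambda(\overline{s},f\otimes\chi)}=\Lambda(s,f\otimes\overline{\chi})$); this is what the paper uses in the proof of Lemma~\ref{lemma 3} and implicitly here via the factor $2^{4n+1}$. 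Your averaging-over-$\chi$ argument nevertheless lands on the correct bound because the right-hand side of Theorem~\ref{zero-density} is invariant under $[t_1,t_2]\mapsto[-t_2,-t_1]$, so the sign flip on $\gamma$ is harmless — but the phrase "with the same $\gamma$" should be corrected, or replaced by an appeal to the within-$L$-function symmetry $\rho\mapsto1-\overline{\rho}$, which makes the $\chi\leftrightarrow\overline{\chi}$ swap unnecessary.
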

\begin{proof}
From the definition of $\sigma_{x}$, we have
\begin{equation*}
\begin{split}
&(\sigma_{x}-1/2)^{4n}x^{4n(\sigma_{x}-1/2)}\\
\leq&\left(\frac{10}{\log x}\right)^{4n}x^{40n/\log x}
+2^{4n+1}\sum_{\beta>\frac{1}{2}+\frac{5}{\log x}\atop |t-\gamma|\leq\frac{x^{3|\beta-1/2|}}{\log x}}(\beta-1/2)^{4n}x^{8n(\beta-1/2)}.
\end{split}\end{equation*}
On the other hand,
\begin{equation*}
\begin{split}
&\sum_{\beta>\frac{1}{2}+\frac{5}{\log x}\atop |t-\gamma|\leq\frac{x^{3|\beta-1/2|}}{\log x}}(\beta-1/2)^{4n}x^{8n(\beta-1/2)}\\
&\leq\sum_{j=5}^{\frac{1}{2}\lfloor\log x\rfloor}\left(\frac{j+1}{\log x}\right)^{4n}
x^{8n\frac{j+1}{\log x}}\sum_{\frac{1}{2}+\frac{j}{\log x}<\beta\leq\frac{1}{2}+\frac{j+1}{\log x}\atop |t-\gamma|\leq\frac{x^{3|\beta-1/2|}}{\log x}}1\\
&\leq\frac{1}{(\log x)^{4n}}\sum_{j=5}^{\frac{1}{2}\lfloor\log x\rfloor}\left(j+1\right)^{4n}
e^{8n(j+1)}N\left(f\otimes\chi;\frac{1}{2}+\frac{j}{\log x}, t-\frac{e^{3(j+1)}}{\log x},t+\frac{e^{3(j+1)}}{\log x}\right).
\end{split}\end{equation*}

By Theorem \ref{zero-density}, there exists an absolute constant $A>0$ such that
\begin{equation*}
\begin{split}
&\frac{1}{\varphi^*(q)}\;\;\sideset{}{^*}\sum_{\chi \bmod q}
\sum_{\beta>\frac{1}{2}+\frac{5}{\log x}\atop |t-\gamma|\leq\frac{x^{3|\beta-1/2|}}{\log x}}(\beta-1/2)^{4n}x^{8n(\beta-1/2)}\\
&\leq\frac{1}{(\log x)^{4n}}\sum_{j=5}^{\frac{1}{2}\lfloor\log x\rfloor}\left(j+1\right)^{4n}
e^{8n(j+1)}\frac{1}{\varphi^*(q)}\;\;\sideset{}{^*}\sum_{\chi \bmod q}
N\left(f\otimes\chi;\frac{1}{2}+\frac{j}{\log x}, t-\frac{e^{3(j+1)}}{\log x},t+\frac{e^{3(j+1)}}{\log x}\right)\\
&\ll\frac{1}{(\log x)^{4n}}\sum_{j=5}^{\infty}\left(j+1\right)^{4n}
e^{8n(j+1)}\left(1+|t|+\frac{e^{3(j+1)}}{\log x}\right)^A q^{-\eta\frac{j}{\log x}}\frac{e^{3(j+1)}}{\log x}\log q\\
&\ll_{t,n,A,\delta}\,\frac{\log q}{(\log x)^{4n+A+1}}\sum_{j=5}^{\infty}\left(j+1\right)^{4n}
e^{\big(8n+3A+3-\frac{3\eta}{\delta}\big)j}\\
&\ll_{t,n,A,\delta}\,\frac{1}{(\log q)^{4n}}
\end{split}\end{equation*}
provided that $0<\delta<\frac{3\eta}{8n+3A+3}$.
In addition, we have
\bna
\frac{1}{\varphi^*(q)}\;\;\sideset{}{^*}\sum_{\chi \bmod q}\left(\frac{10}{\log x}\right)^{4n}x^{40n/\log x}
\ll\frac{1}{(\log x)^{4n}}\ll\frac{1}{(\log q)^{4n}}.
\ena
Thus we complete the proof of the lemma.
\end{proof}

\begin{lemma}\label{lemma 5}
We have
\begin{equation*}
\begin{split}
R(t,f\otimes\chi)
=&\,O\left(\left|\Im\sum_{p\leq x^3}
\frac{C_f(p)(\Lambda_x(p)-\Lambda(p))\chi(p)}{p^{1/2+\i t}\log p}\right|\right)
+O\left(\left|\Im\sum_{p\leq x^{3/2}}\frac{C_f(p^2)\Lambda_x(p^2)\chi(p^2)}{p^{1+2\i t}\log p}\right|\right)\\
&+O\left((\sigma_x-1/2)x^{\sigma_x-1/2}\int_{1/2}^\infty
x^{1/2-\sigma}\left|\sum_{p\leq x^3}\frac{C_f(p)\Lambda_x(p)\chi(p)\log(xp)}{p^{\sigma+\i t}}\right|\mathrm{d}\sigma\right)\\
&+O\big((\sigma_x-1/2)\log(q(|t|+3))\big)+O(1).
\end{split}\end{equation*}
\end{lemma}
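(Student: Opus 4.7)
The plan is to start from the approximation of $S(t,f\otimes\chi)$ furnished by Proposition~\ref{theorem 1} and match its main term piecewise against $\pi M(t,f\otimes\chi)=\Im\sum_{p\leq x^3}\frac{C_f(p)\chi(p)}{p^{1/2+\i t}}$. Since $C_f(n)=0$ unless $n=p^m$, the approximating sum naturally splits by the exponent $m$. For $m\geq 3$, the Deligne-type bound $|C_f(p^m)|\leq 2$, together with $\Lambda_x(p^m)\leq\log p^m$ and $\sigma_x\geq 1/2$, gives a convergent tail $\sum_{m\geq 3}\sum_p p^{-m/2}=O(1)$, which is absorbed into the final $O(1)$ term. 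For $m=2$, the sum at exponent $2\sigma_x$ differs from the stated form (with exponent $1$) via $|p^{-(2\sigma_x-1)}-1|\leq 2(\sigma_x-1/2)\log p$, producing an error $O\big((\sigma_x-1/2)\sum_{p\leq x^{3/2}}(\log p)/p\big)=O((\sigma_x-1/2)\log(q(|t|+3)))$, absorbed into the fourth stated error.

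For the $m=1$ block, set $F(\sigma,t):=\sum_{p\leq x^3}\frac{C_f(p)\Lambda_x(p)\chi(p)}{p^{\sigma+\i t}}$. The fundamental theorem of calculus in $\sigma$ yields
\begin{equation*}
\Im\sum_{p\leq x^3}\frac{C_f(p)\Lambda_x(p)\chi(p)}{p^{\sigma_x+\i t}\log p}
=\Im\sum_{p\leq x^3}\frac{C_f(p)\Lambda_x(p)\chi(p)}{p^{1/2+\i t}\log p}
-\Im\int_{1/2}^{\sigma_x}F(\sigma,t)\,\mathrm{d}\sigma.
\end{equation*}
Since $\Lambda(p)=\log p$, the first term on the right splits as $\pi M(t,f\otimes\chi)+\Im\sum_{p\leq x^3}\frac{C_f(p)(\Lambda_x(p)-\Lambda(p))\chi(p)}{p^{1/2+\i t}\log p}$, and the latter piece is exactly the first stated error term. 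It therefore remains to control the residual integral of $F$ and the leftover errors from Proposition~\ref{theorem 1}.

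The main obstacle is manufacturing the $\log(xp)$ weight that appears in the third stated error. The key identity is
\begin{equation*}
F(\sigma,t)=\int_\sigma^\infty x^{\sigma-\tau}G(\tau,t)\,\mathrm{d}\tau,\qquad
G(\tau,t):=\sum_{p\leq x^3}\frac{C_f(p)\Lambda_x(p)\chi(p)\log(xp)}{p^{\tau+\i t}},
\end{equation*}
which follows from $\int_\sigma^\infty (xp)^{-\tau}\,\mathrm{d}\tau=(xp)^{-\sigma}/\log(xp)$: the factor $1/\log(xp)$ cancels the extra weight in $G$ and reproduces $F$. Inserting this identity into the residual integral and swapping order of integration yields
\begin{equation*}
\left|\int_{1/2}^{\sigma_x}F(\sigma,t)\,\mathrm{d}\sigma\right|\leq\int_{1/2}^\infty|G(\tau,t)|\int_{1/2}^{\min(\sigma_x,\tau)}x^{\sigma-\tau}\,\mathrm{d}\sigma\,\mathrm{d}\tau,
\end{equation*}
and the elementary inequality $1-x^{1/2-\sigma_x}\leq(\sigma_x-1/2)\log x$ (from $1-e^{-y}\leq y$) bounds the inner $\sigma$-integral uniformly by $(\sigma_x-1/2)x^{\sigma_x-\tau}$, delivering exactly the third stated error term.

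The same identity evaluated at $\sigma=\sigma_x$ gives $|F(\sigma_x,t)|\leq x^{\sigma_x-1/2}\int_{1/2}^\infty x^{1/2-\tau}|G(\tau,t)|\,\mathrm{d}\tau$, so the residual $O\big((\sigma_x-1/2)|\sum_{n\leq x^3}\frac{C_f(n)\Lambda_x(n)\chi(n)}{n^{\sigma_x+\i t}}|\big)$ error from Proposition~\ref{theorem 1}, after isolating the $m=1$ block, is again controlled by the third stated error term; its $m\geq 2$ contribution is $O(\sigma_x-1/2)$, absorbed into the fourth error. The remaining $O((\sigma_x-1/2)\log(q(|t|+3)))$ term from Proposition~\ref{theorem 1} matches the fourth error directly, completing the proof.
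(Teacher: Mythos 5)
Your proposal is correct and follows essentially the same route as the paper: start from Proposition~\ref{theorem 1}, split the Dirichlet-type sum by prime-power exponent $m$, treat $m\geq 3$ as $O(1)$ and the $m=2$ shift as $O((\sigma_x-1/2)\log q)$, and for $m=1$ invoke the integral identity $F(\sigma)=\int_\sigma^\infty x^{\sigma-\tau}G(\tau)\,\mathrm{d}\tau$ with $G$ carrying the $\log(xp)$ weight. The only cosmetic differences are that you swap the order of integration rather than estimating $|F(a)|$ uniformly in $a\in[1/2,\sigma_x]$ and pulling it out of the $a$-integral, and your remark that the $m\geq 2$ part of the residual error is $O(\sigma_x-1/2)$ should read $O((\sigma_x-1/2)\log q)$ for the $m=2$ piece, though this is still absorbed into the fourth error term.
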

\begin{proof}
By Proposition \ref{theorem 1}, we have
\begin{equation*}
\begin{split}
&R(t,f\otimes\chi)\\
=&\frac{1}{\pi}\Im\sum_{p\leq x^3}\frac{C_f(p)(\Lambda_x(p)p^{1/2-\sigma_x}-\Lambda(p))\chi(p)}{p^{1/2+\i t}\log p}+\frac{1}{\pi}\Im\sum_{m=2}^\infty\sum_{p^m\leq x^3}\frac{C_f(p^m)\Lambda_x(p^m)\chi(p^m)}{p^{m(\sigma_x+\i t)}\log p^m}\\
&+O\left((\sigma_x-1/2)\left|\sum_{m=1}^\infty\sum_{p^m\leq x^3} \frac{C_f(p^m)\Lambda_x(p^m)\chi(p^m)}{p^{m(\sigma_x+\i t)}}\right|\right)
+O\big((\sigma_x-1/2)\log(q(|t|+3))\big).
\end{split}\end{equation*}
Using the estimates in \eqref{eq:KS} and \eqref{Deligne bound}, we have
\bna
\sum_{m=3}^\infty\sum_{p^m\leq x^3}\frac{C_f(p^m)\Lambda_x(p^m)
\chi(p^m)}{p^{m(\sigma_x+\i t)}}=O(1)
\quad\text{and}\quad
\sum_{m=3}^\infty\sum_{p^m\leq x^3}\frac{C_f(p^m)\Lambda_x(p^m)\chi(p^m)}{p^{m(\sigma_x+\i t)}\log p}=O(1).
\ena
Note that
\begin{equation*}
\begin{split}
(\sigma_x-1/2)\left|\sum_{p\leq x^{3/2}} \frac{C_f(p^2)\Lambda_x(p^2)
\chi(p^2)}{p^{2(\sigma_x+\i t)}}\right| &\ll(\sigma_x-1/2)\sum_{p\leq x^{3/2}}\frac{\log p}{p}\\
&\ll(\sigma_x-1/2)\log x\ll (\sigma_x-1/2)\log q.
\end{split}\end{equation*}
Thus,
\begin{equation*}
\begin{split}
&R(t,f\otimes\chi)\\
=&\,\frac{1}{\pi}\Im\sum_{p\leq x^3}\frac{C_f(p)(\Lambda_x(p)-\Lambda(p))\chi(p)}{p^{1/2+\i t}\log p}
-\frac{1}{\pi}\Im\sum_{p\leq x^3}
\frac{C_f(p)\Lambda_x(p)\chi(p)(1-p^{1/2-\sigma_x})}{p^{1/2+\i t}\log p}\\
&+\frac{1}{\pi}\Im\sum_{p\leq x^{3/2}}\frac{C_f(p^2)\Lambda_x(p^2)\chi(p^2)}{p^{1+2\i t}\log p^2}
-\frac{1}{\pi}\Im\sum_{p\leq x^{3/2}}\frac{C_f(p^2)\Lambda_x(p^2)\chi(p^2)(1-p^{1-2\sigma_x})}{p^{1+2\i t}\log p^2}\\
&+O\left((\sigma_x-1/2)\left|\sum_{p\leq x^3} \frac{C_f(p)\Lambda_x(p)\chi(p)}{p^{\sigma_x+\i t}}\right|\right)
+O\big((\sigma_x-1/2)\log(q(|t|+3))\big)+O(1)\\
:=&\sum_{i=1}^{7}G_i.
\end{split}\end{equation*}
Note that for $1/2\leq a\leq \sigma_x$,
\begin{equation*}
\begin{split}
\left|\sum_{p\leq x^3} \frac{C_f(p)\Lambda_x(p)\chi(p)}{p^{a+\i t}}\right|
&=x^{a-1/2}\left|\int_{a}^\infty x^{1/2-\sigma}\sum_{p\leq x^3}
\frac{C_f(p)\Lambda_x(p)\chi(p)\log(xp)}{p^{\sigma+\i t}}\mathrm{d}\sigma\right|\\
&\leq x^{\sigma_x-1/2}\int_{1/2}^\infty x^{1/2-\sigma}\left|\sum_{p\leq x^3}\frac{C_f(p)\Lambda_x(p)\chi(p)\log(xp)}{p^{\sigma+\i t}}\right|\mathrm{d}\sigma.
\end{split}\end{equation*}
Thus,
\begin{equation*}
\begin{split}
G_2\ll&\,\left|\sum_{p\leq x^3}
\frac{C_f(p)\Lambda_x(p)\chi(p)(1-p^{1/2-\sigma_x})}{p^{1/2+\i t}\log p}\right|
=\left|\int_{1/2}^{\sigma_x} \sum_{p\leq x^3} \frac{C_f(p)\Lambda_x(p)\chi(p)}{p^{a+\i t}}\mathrm{d}a\right|\\
\leq&\,(\sigma_x-1/2)x^{\sigma_x-1/2}\int_{1/2}^\infty x^{1/2-\sigma}
\left|\sum_{p\leq x^3}\frac{C_f(p)\Lambda_x(p)\chi(p)\log(xp)}{p^{\sigma+\i t}}\right|\mathrm{d}\sigma,
\end{split}\end{equation*}
and similarly,
\begin{equation*}
\begin{split}
G_5&\ll(\sigma_x-1/2)\left|\sum_{p\leq x^3} \frac{C_f(p)\Lambda_x(p)\chi(p)}{p^{\sigma_x+\i t}}\right|\\
&\leq (\sigma_x-1/2)x^{\sigma_x-1/2}\int_{1/2}^\infty
x^{1/2-\sigma}\left|\sum_{p\leq x^3}\frac{C_f(p)\Lambda_x(p)\chi(p)\log(xp)}{p^{\sigma+\i t}}\right|\mathrm{d}\sigma.
\end{split}\end{equation*}
Moreover,
\begin{equation*}
\begin{split}
G_4&\ll\left|\sum_{p\leq x^{3/2}}\frac{C_f(p^2)\Lambda_x(p^2)\chi(p^2)(1-p^{1-2\sigma_x})}{p^{1+2\i t}\log p}\right|\\
&\ll \sum_{p\leq x^{3/2}}\frac{1}{p}(1-p^{1-2\sigma_x})\ll \sum_{p\leq x^{3/2}}(\sigma_x-1/2)\frac{\log p}{p}
\ll(\sigma_x-1/2)\log q.
\end{split}\end{equation*}
This completes the proof of the lemma.
\end{proof}
Now we are ready to prove \eqref{R(t,f) moment}.
Recall that $\sigma_x=\sigma_{x,f\otimes\chi,t}$ depending on $f,\chi$.
By Lemma \ref{lemma 5}, we have
\bea\label{R}
&&\frac{1}{\varphi^*(q)}\;\;\sideset{}{^*}\sum_{\chi \bmod q}|R(t,f\otimes\chi)|^{2n}\nonumber\\
&&\ll\frac{1}{\varphi^*(q)}\;\;\sideset{}{^*}\sum_{\chi \bmod q}\left|\Im\sum_{p\leq x^3}
\frac{C_f(p)(\Lambda_x(p)-\Lambda(p))\chi(p)}{p^{1/2+\i t}\log p}\right|^{2n}\nonumber\\
&&+\frac{1}{\varphi^*(q)}\;\;\sideset{}{^*}\sum_{\chi \bmod q}
\left|\Im\sum_{p\leq x^{3/2}}\frac{C_f(p^2)\Lambda_x(p^2)\chi(p^2)}{p^{1+2\i t}\log p}\right|^{2n}\nonumber\\
&&+\frac{1}{\varphi^*(q)}\;\;\sideset{}{^*}\sum_{\chi \bmod q}(\sigma_x-1/2)^{2n}x^{2n(\sigma_x-1/2)}\left(\int_{1/2}^\infty
x^{1/2-\sigma}\left|\sum_{p\leq x^3}\frac{C_f(p)\Lambda_x(p)\chi(p)\log(xp)}{p^{\sigma+\i t}}\right|
\mathrm{d}\sigma\right)^{2n}\nonumber\\
&&+\frac{1}{\varphi^*(q)}\;\;\sideset{}{^*}\sum_{\chi \bmod q}(\sigma_x-1/2)^{2n}
\big(\log(q(|t|+3))\big)^{2n}+1.
\eea
Since
\bna
|\Lambda_x(p)-\Lambda(p)|=O\left(\frac{\log^3p}{\log^2x}\right)\quad\text{and}\quad
C_f(p^2)=\lambda_f(p^2)-\chi_r(p),
\ena
the first two terms are $O(1)$ by Lemma \ref{lem18}.
The second-to-last term is of O(1) by Cauchy--Schwarz inequality and Lemma \ref{lemma 6}.
For the third term,
it follows from Cauchy--Schwarz inequality that
\begin{equation*}
\begin{split}
&\frac{1}{\varphi^*(q)}\;\;\sideset{}{^*}\sum_{\chi \bmod q}(\sigma_x-1/2)^{2n}x^{2n(\sigma_x-1/2)}
\left(\int_{1/2}^\infty x^{1/2-\sigma}\left|\sum_{p\leq x^3}\frac{C_f(p)\Lambda_x(p)\chi(p)\log(xp)}{p^{\sigma+\i t}}\right|\mathrm{d}\sigma\right)^{2n}\\
&\leq \; \left(\frac{1}{\varphi^*(q)}\;\;\sideset{}{^*}\sum_{\chi \bmod q}(\sigma_x-1/2)^{4n}x^{4n(\sigma_x-1/2)}\right)^{1/2}\\
&\times\left(\frac{1}{\varphi^*(q)}\;\sideset{}{^*}\sum_{\chi \bmod q}\left(\int_{1/2}^\infty x^{1/2-\sigma}\left|\sum_{p\leq x^3}\frac{C_f(p)\Lambda_x(p)\chi(p)\log(xp)}{p^{\sigma+\i t}}\right|\mathrm{d}\sigma\right)^{4n}\right)^{1/2}.
\end{split}\end{equation*}
By H\"{o}lder's inequality with the exponents $4n/(4n-1)$ and $4n$,  we get
\begin{equation*}
\begin{split}
&\left(\int_{1/2}^\infty x^{1/2-\sigma}\left|\sum_{p\leq x^3}
\frac{C_f(p)\Lambda_x(p)\chi(p)\log(xp)}{p^{\sigma+\i t}}\right|
\mathrm{d}\sigma\right)^{4n}\\
&\leq\left(\int_{1/2}^\infty
x^{1/2-\sigma}\mathrm{d}\sigma\right)^{4n-1}\left(\int_{1/2}^\infty
x^{1/2-\sigma}\left|\sum_{p\leq x^3}\frac{C_f(p)\Lambda_x(p)\chi(p)\log(xp)}{p^{\sigma+\i t}}\right|^{4n}\mathrm{d}\sigma\right)\\
&=\frac{1}{(\log x)^{4n-1}}\int_{1/2}^\infty
x^{1/2-\sigma}\left|\sum_{p\leq x^3}\frac{\Lambda_x(p)C_f(p)\chi(p)\log(xp)}{p^{\sigma+\i t}}\right|^{4n}\mathrm{d}\sigma.
\end{split}
\end{equation*}
Note that
\begin{equation*}
\begin{split}
&\left|\sum_{p\leq x^3}\frac{C_f(p)\Lambda_x(p)\chi(p)\log(xp)}{p^{\sigma+\i t}}\right|^{4n}\\
&\ll_n \left|\Re\sum_{p\leq x^3}
\frac{\lambda_f(p)\Lambda_x(p)\chi(p)\log(xp)}{p^{\sigma+\i t}}\right|^{4n}
+\left|\Im\sum_{p\leq x^3}
\frac{\lambda_f(p)\Lambda_x(p)\chi(p)\log(xp)}{p^{\sigma+\i t}}\right|^{4n}\\
&\ll_n \left(\sum_{p\leq x^3}
\frac{\lambda_f(p)\Lambda_x(p)\chi(p)\log(xp)(p^{1/2-\sigma})}{p^{1/2+\i t}}+\sum_{p\leq x^3}
\frac{\lambda_f(p)\Lambda_x(p)\overline{\chi}(p)\log(xp)(p^{1/2-\sigma})}{p^{1/2-\i t}}\right)^{4n}\\
&\quad+\left(\sum_{p\leq x^3}
\frac{\lambda_f(p)\Lambda_x(p)\chi(p)\log(xp)(p^{1/2-\sigma})}{p^{1/2+\i t}}-\sum_{p\leq x^3}
\frac{\lambda_f(p)\Lambda_x(p)\overline{\chi}(p)\log(xp)(p^{1/2-\sigma})}{p^{1/2-\i t}}\right)^{4n}.
\end{split}
\end{equation*}
By using the same argument as in the proof of \eqref{M(t,f) moment} and Lemma \ref{bound} (2), we get
\begin{equation*}
\begin{split}
&\frac{1}{\varphi^*(q)}\;\sideset{}{^*}\sum_{\chi \bmod q}\left(\int_{1/2}^\infty x^{1/2-\sigma}
\left|\sum_{p\leq x^3}\frac{C_f(p)\Lambda_x(p)\chi(p)\log(xp)}{p^{\sigma+\i t}}
\right|\mathrm{d}\sigma\right)^{4n}\\
&\ll\,\frac{1}{(\log x)^{4n-1}}\int_{1/2}^\infty
x^{1/2-\sigma}\frac{1}{\varphi^*(q)}\;\sideset{}{^*}\sum_{\chi \bmod q}
\left|\sum_{p\leq x^3}\frac{C_f(p)\Lambda_x(p)\chi(p)\log(xp)}{p^{\sigma+\i t}}\right|^{4n}\mathrm{d}\sigma\\
&\ll_{t,n} \frac{1}{(\log x)^{4n-1}}\int_{1/2}^\infty
x^{1/2-\sigma}\left(\sum_{p\leq x^3}\frac{|\lambda_f(p)\Lambda_x(p)\log(xp)p^{1/2-\sigma}|^2}{p}\right)^{2n}\mathrm{d}\sigma\\
&\ll_{t,n} \frac{1}{(\log x)^{4n-1}}\int_{1/2}^\infty
x^{1/2-\sigma}\left((\log x)^2\sum_{p\leq x^3}\frac{(\log p)^2}{p}\right)^{2n}\mathrm{d}\sigma\\
&\ll_{t,n} (\log x)^{4n}.
\end{split}\end{equation*}
Combining this with Lemma \ref{lemma 6}, we conclude that the third term in \eqref{R}
is of $O(1)$.
This completes the proof of \eqref{R(t,f) moment}.

\bibliographystyle{amsplain}

\end{document}